\newcommand{\customlabel}[3]{#3\def\@currentlabel{#2}\label{#1}}
\newcommand{\hrefsf}[2]{\href{#1}{\textsf{#2}}}
\newcommand{\hrefsfopt}[3]{{#1}\hrefsf{#2}{#3}} 
\newcommand{\marginparstretch}{0.6}
\let\oldmarginpar\marginpar
\renewcommand\marginpar[1]{\-\oldmarginpar[\framebox{\setstretch{\marginparstretch}\begin{minipage}{\marginparwidth}{\raggedleft\tiny #1}\end{minipage}}]{\framebox{\setstretch{\marginparstretch}\begin{minipage}{\marginparwidth}{\raggedright\tiny #1}\end{minipage}}}}
\pgfplotsset{compat=1.8}
\tikzset{
  mid arrow/.style={postaction={decorate,decoration={
        markings,
        mark=at position .5 with {\arrow{stealth}}
      }}},
}
\tikzset{
        Point/.style={circle,draw=black,circle,fill=black,inner sep=0pt, minimum size=2pt},
        DynkinBlack/.style={circle,draw=black,circle,fill=black,inner sep=0pt, minimum size=4pt},
         DynkinWhite/.style={circle,draw=black,circle,fill=white,inner sep=0pt, minimum size=4pt},
        vertex/.style={circle,fill=black,inner sep=1pt,outer sep=8pt},
        star/.style={circle,fill=yellow,inner sep=0.75pt,outer sep=0.75pt},
        gap/.style={inner sep=0.5pt,fill=white}}
\tikzstyle{mybox} = [draw=black, fill=blue!10, very thick,
\tikzstyle{boxtitle} =[fill=blue!50, text=white,rectangle,rounded corners]
\newcommand{\arrow}[2][20]
 {
  \hspace{-5pt}
  \begin{tikzpicture}
   \node (A) at (0,0) {};
   \node (B) at (#1pt,0) {};
   \draw [#2] (A) -- (B);
  \end{tikzpicture}
  \hspace{-5pt}
 }
\newcommand{\arrowsplit}{0.4ex}
\newtheorem{theorem}{Theorem}[section]
\newtheorem{keytheorem}{Theorem}[section]
\newtheorem{proposition}[theorem]{Proposition}
\newtheorem{lemma}[theorem]{Lemma}
\newtheorem{definition}[theorem]{Definition}
\newtheorem{corollary}[theorem]{Corollary}
\theoremstyle{definition} 
\newtheorem{example}[theorem]{Example}
\newtheorem{remark}[theorem]{Remark}
\newtheorem*{keyremark}{Remark}
\newtheorem{setup}[theorem]{Setup}
\theoremstyle{remark}
\newtheorem*{acknowledgements}{Acknowledgements}
\newcommand{\defined}[1]{\emph{#1}}
\def\Hom{\mathop{\rm Hom}\nolimits}
\def\D{\mathop{\rm{D}^{}}\nolimits}
\def\K{\mathop{\rm{K}^{}}\nolimits}
\newcommand{\Kbig}[1]{\K\!\big({#1}\big)}
\def\Id{\mathop{\sf{Id}}\nolimits}
\def\Idmatrix{\mathds{1}}
\def\gitQuot{/\!\!/}
\newcommand\gitQuotStab[1]{\gitQuot_{\!#1}}
\newcommand\gitQuotStabBrief[1]{\gitQuot{#1}}
\newcommand{\cC}{\mathcal{C}}
\newcommand{\cD}{\mathcal{D}}
\newcommand{\cE}{\mathcal{E}}
\newcommand{\cG}{\mathcal{G}}
\newcommand{\cM}{\mathcal{M}}
\newcommand{\cN}{\mathcal{N}}
\newcommand{\cO}{\mathcal{O}}
\newcommand{\cP}{\mathcal{P}}
\newcommand{\cW}{\mathcal{W}}
\newcommand\compose{\circ}
\newcommand\placeholder{-}
\newcommand\dual{\vee}
\newlength\tempWidth
\newcommand\KS{Kapranov--Schechtman}
\newcommand\KandS{Kapranov and Schechtman}
\newcommand\linzn{\cM}
\newcommand\sphFun{\mathsf{S}}
\newcommand\twistFun{\mathsf{T}}
\newcommand\cotwistFun{\mathsf{C}}
\newcommand\field{\mathds{k}}
\newcommand\reals{\mathbb{R}}
\newcommand\labelpos{1.3}
\newcommand\labelposrelax{0.2}
\newcommand\axisposrelax{0.4}
\newcommand\sphaxiscut{0.4}
\newcommand\arrowbend{55}
\newcommand\labeladjust{0.05}
\newcommand{\axes}[3]{
 \draw[->, gray!50] (-2.3-#2+#3,0) to (2.4+#2,0) node[right, #1] {$\reals$};
 \draw[->, gray!50] (0,-1.7) to (0,1.8) node[above, #1] {$i\reals$};
}
\newcommand{\monodromyPic}[6]{
\begin{tikzpicture}[node distance=1cm, auto, line width=0.5pt]

\axes{white}{#6}{0}

 \node (mid) at (0,-\labeladjust) {#1};
 \node (pos) at (-\labelpos-#5,-\labeladjust)  {#2};
 \node (neg) at (+\labelpos+#5,-\labeladjust) {#3};

 \draw[->, bend left=\arrowbend] (pos.north) to (mid.north west);
 \draw[->, bend left=\arrowbend] (mid.north east) to (neg.north);
 \draw[->, dashed, bend left=\arrowbend] (pos.north) to node[above] {\scriptsize #4} (neg.north);

 \draw[->, bend left=\arrowbend] (neg.south) to (mid.south east);
 \draw[->, bend left=\arrowbend] (mid.south west) to (pos.south);
 \draw[->, dashed, bend left=\arrowbend] (neg.south) to (pos.south);

\end{tikzpicture}
}
\newcommand{\monodromyOnlyPic}[4]{
\begin{tikzpicture}[node distance=1cm, auto, line width=0.5pt]

\axes{white}{\axisposrelax}{0}

 \node (mid) at (0,-\labeladjust) {#1};
 \node (pos) at (-\labelpos-\labelposrelax,-\labeladjust)  {#2};
 \node (neg) at (+\labelpos+\labelposrelax,-\labeladjust) {#3};

 \draw[->, bend left=\arrowbend] (pos.north) to node[above] {\scriptsize #4} (neg.north);

 \draw[->, bend left=\arrowbend] (neg.south) to (pos.south);

\end{tikzpicture}
}
\newcommand{\objectsOnlyPic}[4]{
\begin{tikzpicture}[node distance=1cm, auto, line width=0.5pt]

\axes{white}{0}{0}

 \node (mid) at (0,0) {#1};
 \node (pos) at (-\labelpos,0)  {#2};
 \node (neg) at (+\labelpos,0) {#3};



\end{tikzpicture}
}
\newcommand{\sphericalPic}[4]{
\begin{tikzpicture}[node distance=1cm, auto, line width=0.5pt]

\axes{white}{\axisposrelax}{2*\sphaxiscut}

 \node (mid) at (0,-2*\labeladjust) {#1};
 \node (neg) at (+\labelpos+\labelposrelax,-\labeladjust) {#3};

 \draw[->, bend left=\arrowbend] (mid.north) to node[above] {\scriptsize #4} (neg.north);

 \draw[->, bend left=\arrowbend] (neg.south) to (mid.south);

\end{tikzpicture}
}
\newcommand{\cutPic}[4]{
\begin{tikzpicture}[node distance=1cm, auto, line width=0.5pt]

\axes{black}{0}{0}

\draw[line width=0.5pt] (0,0) circle (1.5);

\draw[fill=black] (0,0) circle (.2ex);
\draw[fill=black] (-1.5,0) circle (.2ex);
\draw[fill=black] (+1.5,0) circle (.2ex);

\draw (0,0) to [bend right=20] (1.5,0);
\draw (0,0) to [bend right=20] (-1.5,0);

 \node (label) at (0.8,-0.4) {#4};

 \node (pos) at (-1.8,0.2)  {\scriptsize $-1$};
 \node (neg) at (+1.8,0.2) {\scriptsize $+1$};

\end{tikzpicture}
}
\newcommand{\halfCutPic}[4]{
\begin{tikzpicture}[node distance=1cm, auto, line width=0.5pt]

\axes{black}{0}{\sphaxiscut}

\draw[line width=0.5pt] (0,0) circle (1.5);

\draw[fill=black] (0,0) circle (.2ex);
\draw[fill=black] (+1.5,0) circle (.2ex);

\draw (0,0) to [bend right=20] (1.5,0);

 \node (label) at (0.8,-0.4) {#4};

 \node (neg) at (+1.8,0.2) {\scriptsize $+1$};

\end{tikzpicture}
}
\begin{document}

\title[Perverse schobers and wall crossing]{\phantom{text}\vspace{-0.8cm}Perverse schobers and wall crossing}
\author{W.\ Donovan}
\address{Kavli IPMU (WPI), UTIAS, University of Tokyo, Kashiwa, Chiba, Japan}
\email{will.donovan@ipmu.jp}
\begin{abstract}
For a balanced wall crossing in geometric invariant theory, there exist derived equivalences between the corresponding GIT quotients if certain numerical conditions are satisfied. Given such a wall crossing, I~construct a perverse sheaf of categories on a disk, singular at a point, with half-monodromies recovering these equivalences, and with behaviour at the singular point controlled by a GIT quotient stack associated to the~wall. Taking complexified Grothendieck groups gives a perverse sheaf of vector spaces: I characterise when this is an intersection cohomology complex of a local system on the punctured disk.
\end{abstract}

\subjclass[2010]{Primary 14F05; 
Secondary 14E05, 
14L24, 
18E30, 
32S60. 
}
\thanks{The author is supported by World Premier International Research Center Initiative (WPI Initiative), MEXT, Japan, and JSPS KAKENHI Grant Number~JP16K17561.}
\maketitle
\parindent 20pt
\parskip 0pt


\tableofcontents

\section{Introduction}
\label{section intro}

\KandS{} have initiated a program to define and study perverse sheaves of triangulated categories, known as perverse schobers~\cite{KS2}. It is expected that these new objects will be crucial to the emerging field of categorified birational geometry. In this paper, I give a large class of examples in the form of `spherical pairs'. These arise as categorifications of perverse sheaves of vector spaces on a disk, possibly singular at a point. I~will explain, following \KS{}, different such categorifications, depending on a choice of `skeleton' consisting of a~union of disjoint arcs from the disk boundary to the point: for~two~arcs, the categorification is a spherical pair; for~one~arc, it is a \mbox{spherical functor}.

\newpage
My main result, Theorem~\ref{keytheorem}, constructs spherical pairs for many birational maps coming from GIT wall crossings. My simplest examples come from an orbifold flop of local~$\mathbb{P}^1$, and an Atiyah flop of a resolved conifold. These are given in an expository Section~\ref{section simple}: further examples, namely flops of local $\mathbb{P}^{\mathrm{odd}}$, and standard flops, are given in the final Section~\ref{section examples}.

\begin{keyremark} Harder, Katzarkov, and Liu study a different notion of perverse sheaves of categories, in relation to rationality: see~\cite{HKL} and references therein.\end{keyremark}

\subsection{Discussion} I outline some of my results informally, before giving details in Section~\ref{section results}.

\subsubsection*{Spherical pairs} Take embeddings of a pair of categories of interest $\cE_\pm$ into a single category $\cE_0$. This data may be viewed as a perverse sheaf of categories on a disk as follows. Take $K$ to be a skeleton with two arcs joining~$\pm 1$~to~$0$, as illustrated in Figure~\ref{figure sph pair}. For a perverse sheaf of vector spaces, possibly singular at $0$, the local cohomology with support in $K$ is concentrated in some fixed degree by \mbox{`purity'}: the categories $\cE_\pm$ and $\cE_0$ should then be seen as categorifications of the stalks of this sheaf of local cohomology at~$\pm 1$~and~$0$ respectively, and the embeddings as categorifications of maps between them. A spherical pair consists of such categories $\cE_\pm$ and $\cE_0$ along with embeddings satisfying natural conditions: these are given in Section~\ref{subsection.sph_pair}.

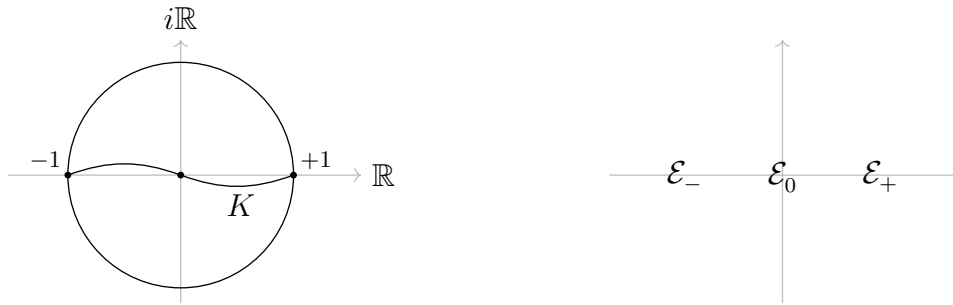
\begin{figure}[h]
\begin{center}
\begin{tikzpicture}
 \node at (0,0) {$\cutPic{$0$}{$-1$}{$+1$}{$K$}$};
 \node at (8,0) {$\objectsOnlyPic{$\cE_0^{\vphantom{+}}$}{$\cE_-^{\vphantom{+}}$}{$\cE_+^{\vphantom{+}}$}{}$};
 \end{tikzpicture}
\end{center}
\caption{Spherical pair.}\label{figure sph pair}
\end{figure}

\subsubsection*{Setup} To obtain an example of a spherical pair, consider a GIT wall crossing given by the data of a variety $X$ with an action of a group $G$ and a family of linearizations $\cM_t$ for small $t\in\mathbb{R}$, such that the GIT quotients $X \gitQuotStab{t}\, G$ are constant for $t<0$ and $t>0$ respectively, but there exist strict semistables for~$t=0$. Assume furthermore that this wall crossing is `simple balanced': this implies in particular that the change in the GIT quotient due to crossing the wall is controlled by a single one-parameter subgroup~$\lambda$ of~$G$. In this case, assuming a numerical condition given in Theorem~\ref{keytheorem}(\ref{keytheorem assumption 2}), there exist derived equivalences between the quotients $X \gitQuotStabBrief{\pm}$ by results of Halpern-Leistner~\cite{HL} and Ballard--Favero--Katzarkov~\cite{BFK}.

\subsubsection*{Construction} Given this setup, I construct a spherical pair or, equivalently, a perverse sheaf of categories on the disk where $\cE_\pm$ are the bounded derived categories $\D(X \gitQuotStabBrief{\pm}),$ and $\cE_0$ is a subcategory $\cC$ of the bounded derived category $\D(X^{\mathrm{ss}}(\cM_0)/G)$ of the stack of semistables associated to the wall. This data is indicated in Figure~\ref{figure sph pair eg}.

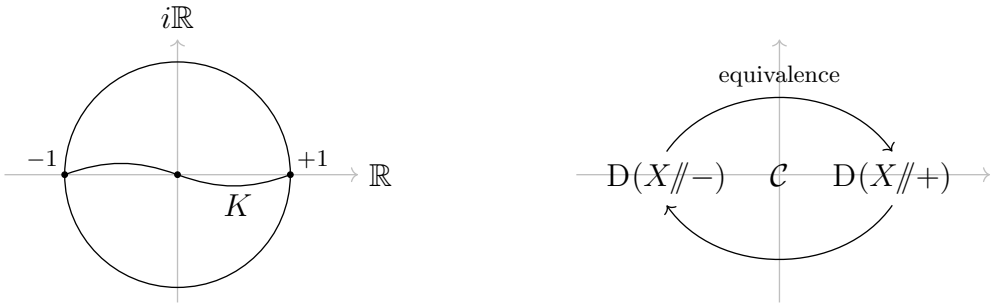
\begin{figure}[h]
\begin{center}
\begin{tikzpicture}
 \node at (0,0) {$\cutPic{$0$}{$-1$}{$+1$}{$K$}$};
 \node at (8,0) {$\monodromyOnlyPic{$\mathcal{C}\vphantom{\gitQuotStabBrief{+}}$}{$\D(X \gitQuotStabBrief{-})$}{$\D(X \gitQuotStabBrief{+})$}{equivalence}$};
\end{tikzpicture}
\end{center}
\caption{Spherical pair example.}\label{figure sph pair eg}
\end{figure}

\begin{keyremark} It is natural to view the disk on which our perverse sheaf of categories is defined as lying in the complexification of the GIT parameter space~$\mathbb{R}$: intriguingly, this suggests the existence of natural perverse schobers on (appropriate compactifications of) Bridgeland stability spaces.
\end{keyremark}

\begin{keyremark} For comparison, note that Bodzenta and Bondal construct spherical pairs for certain varieties $Y_\pm$ related by flops of families of curves, some of which arise from GIT wall crossings as above, where $\cE_\pm = \D(Y_\pm)$, and $\cE_0$~is obtained from the fibre product $Y_- \times_B Y_+$ over the base $B$ of the flop~\cite{BB}. Current work of Bondal, Kapranov, and Schechtman will generalize this construction to interesting webs of flops, in particular flops of Springer resolutions, using a notion of perverse schobers on $\mathbb{C}^n$ singular along a real hyperplane arrangement~\cite{BKS}.\end{keyremark}

\subsubsection*{Monodromy} Spherical pairs induce equivalences $\cE_- \!\leftrightarrow \cE_+$, categorifications of the half-monodromy functions for a perverse sheaf of vector spaces. (In~the example above, these half-monodromy functors recover the known derived equivalences between the quotients $X \gitQuotStabBrief{\pm}$.) By composing these, the spherical pair also induce symmetries of the categories $\cE_\pm$, categorifications of monodromy actions. Different choices of cut give different descriptions of this monodromy. For instance, it turns out that for a skeleton $K'$ with only one~arc, as in Figure~\ref{fig dim 1 cats}, the categorification is a spherical functor, and that spherical pairs determine spherical functors which recover the symmetry of~$\cE_+$ above as a spherical twist.

\subsubsection*{Spherical functor} The spherical functor determined by our spherical pair is shown in Figure~\ref{fig dim 1 cats}. The category $\cD$ is given in Theorem~\ref{keytheorem}(\ref{keytheorem 3}) in terms of sheaves on the unstable locus of one of the GIT quotients; the functor itself is described in Corollary~\ref{corollary sph functor}. These spherical functors have appeared previously in work of Halpern-Leistner and Shipman~\cite{HLShipman}: our spherical pair construction provides a new viewpoint on their results.

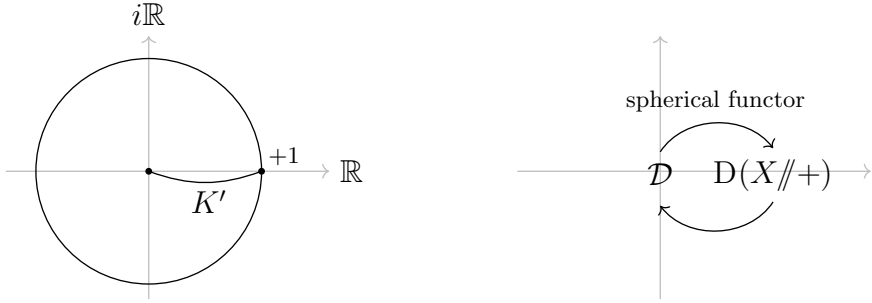
\begin{figure}[h]
\begin{center}
\begin{tikzpicture}
 \node at (0,0) {$\halfCutPic{$0$}{$-1$}{$+1$}{$K'$}$};
 \node at (7,0) {$\sphericalPic{$\mathcal{D}_{\vphantom{\gitQuotStabBrief{+}}}$}{$\D(X \gitQuotStab{-} G)$}{$\D(X \gitQuotStabBrief{+} )$}{spherical functor}$};
\end{tikzpicture}
\end{center}
\caption{Spherical functor.}\label{fig dim 1 cats}

\end{figure}

\begin{keyremark} A technical advantage of working with spherical pairs over spherical functors is that they are easier to construct: we need only show that certain functors are equivalences, without the need to take troublesome functorial cones.\end{keyremark}

\subsection{Results}\label{section results} Consider a GIT wall crossing given by the data of a projective-over-affine variety $X$ with an action of a connected reductive group~$G$ and a pair of linearizations $\cM_\pm$ in the sense of Section~\ref{subsection vgit}. The~unstable loci $X - X^{\mathrm{ss}}(\cM_\pm)$ then come with non-canonical GIT stratifications. Require that this wall crossing is `simple balanced' (Definition~\ref{definition simple balanced}): informally, this means that only a single stratum is affected by crossing the wall. Associated to this stratum is the data of:
\begin{itemize}
\item  a Levi subgroup $L$ of $G$;
\item a one-parameter subgroup $\lambda$ of $L$; and
\item an open subset $Z$ of the $\lambda$-fixed locus in $X$.
\end{itemize}
The following theorem, which is the main result of this paper, constructs a spherical pair for such a wall crossing.

\begin{keytheorem}\label{keytheorem} Take a GIT wall crossing for a variety $X$ with a $G$-action, as~above, which is `simple balanced' as in Definition~\ref{definition simple balanced}. Assume that:

\renewcommand{\theenumi}{\roman{enumi}}
\begin{enumerate}
\item\label{keytheorem assumption 1} the variety $X$ is smooth in a $G$-equivariant neighbourhood of $Z$;
\item\label{keytheorem assumption 2} the canonical sheaf $\omega_X$ has $\lambda$-weight zero on $Z$; and
\item\label{keytheorem assumption 3} the group $G$ is abelian.
\end{enumerate}
Let $\cM_0$ denote a linearization on the wall, and $X^{\mathrm{ss}}(\cM_0)$ the associated semistable locus in $X$. Then the following hold.
\renewcommand{\theenumi}{\arabic{enumi}}
\begin{enumerate}
\item\label{keytheorem 1} \emph{(Theorem \ref{theorem.sph_pair})} For each integer $w$, there exists a subcategory $\cC$ of $\D(X^{\mathrm{ss}}(\cM_0)/G)$ with embeddings
\begin{align*}
\iota_{-\vphantom{+}} & \colon \D(Z/L)^w \longrightarrow \cC \\
\iota_+ & \colon \D(Z/L)^{w+\eta} \longrightarrow \cC
\end{align*}
giving a spherical pair $\cP$~\cite{KS2}, where superscripts denote \mbox{$\lambda$-weight} subcategories, and $\eta$ is the window width of Definition~\ref{definition window width}. 
\item\label{keytheorem 3} \emph{(Corollary \ref{corollary sph functor})} The spherical pair $\cP$ induces a spherical functor
\begin{equation*}
\sphFun \colon \cD  \longrightarrow \D(X \gitQuotStabBrief{+})
\end{equation*}
where $\cD$ denotes the category $\D(Z/L)^w$. 
\item\label{keytheorem 2} \emph{(Corollary \ref{corollary dual pair})} Furthermore, there exist natural embeddings
\[
\epsilon_\pm \colon \D(X \gitQuotStabBrief{\pm}) \longrightarrow \mathcal{C}
\]
which determine a spherical pair in a sense dual to that of \cite{KS2}.
\end{enumerate}
\end{keytheorem}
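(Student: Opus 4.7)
The approach is to realize $\cC$ as a window subcategory of $\D(X^{\mathrm{ss}}(\cM_0)/G)$, cut out by imposing $\lambda$-weight bounds on restriction to $Z$, and then to extract the spherical pair structure from the semiorthogonal decompositions provided by the window theorems of Halpern-Leistner and Ballard--Favero--Katzarkov. Specifically, I would define $\cC$ to be the full subcategory of objects of $\D(X^{\mathrm{ss}}(\cM_0)/G)$ whose $\lambda$-weights on $Z$ lie in the interval $[w, w+\eta]$. Because the wall crossing is simple balanced, the unstable loci $X^{\mathrm{ss}}(\cM_0)\setminus X^{\mathrm{ss}}(\cM_\pm)$ are single Bia{\l}ynicki-Birula strata fibred over $Z$, so restriction from $\cC$ to $\D(X\gitQuotStabBrief{\pm})$ is fully faithful with image a shifted window of width $\eta$; this produces the embeddings $\epsilon_\pm$ of part~(\ref{keytheorem 2}).

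For part~(\ref{keytheorem 1}), I would construct $\iota_\pm$ by taking a pure weight-$w$ (resp.\ weight-$(w+\eta)$) complex on $Z/L$, pulling it back along the attracting-bundle projection from one of the two BB strata to $Z$, and pushing it forward into $X^{\mathrm{ss}}(\cM_0)/G$; a twist by the top of the normal bundle enters naturally so that the resulting object lands in $\cC$. The spherical pair axioms of~\cite{KS2} then require two things: first, that the right orthogonal inside $\cC$ of the image of $\iota_-$ coincides with the image of $\epsilon_+$, and symmetrically for $\iota_+$ and $\epsilon_-$; and second, that the four gluing functors obtained from these dovetailed semiorthogonal decompositions compose to equivalences. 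The first is immediate from the window theorems once one identifies the pure-weight objects as the complement of the relevant window. The second is where assumption~(\ref{keytheorem assumption 2}) does its work: the vanishing $\lambda$-weight of $\omega_X$ on $Z$ enforces a Serre-duality symmetry inside $\cC$ that exchanges the two window boundaries, which is exactly the balance condition needed for the gluing functors to be equivalences. Assumption~(\ref{keytheorem assumption 3}) reduces the local analysis near $Z$ to a toric computation on the normal bundle, and assumption~(\ref{keytheorem assumption 1}) guarantees that this normal bundle makes sense.

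Part~(\ref{keytheorem 3}) then follows from the general procedure of \KS{} turning a spherical pair into a spherical functor by collapsing one arc of the two-arc skeleton onto the other. Concretely, the spherical functor $\sphFun\colon \D(Z/L)^w \to \D(X\gitQuotStabBrief{+})$ should be the composite of $\iota_-$ with the right adjoint of $\epsilon_+$, and its associated spherical twist should recover the monodromy autoequivalence of $\D(X\gitQuotStabBrief{+})$ obtained by composing the two half-monodromies built into the spherical pair.

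The main obstacle I anticipate is verifying the four-functor equivalence axiom of the spherical pair: showing that the canonical mutation maps are genuine isomorphisms rather than merely natural transformations. In practice this should reduce to an explicit Koszul-resolution computation in the local model near $Z$, where the abelian hypothesis makes the $\lambda$-weight bookkeeping transparent; but extracting the equivalence from this local picture and transporting it back to the global setting, in the presence of a non-trivial fixed locus $Z$, is where the technical care will be required.
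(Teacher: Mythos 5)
Your setup coincides with the paper's: the same window category $\cC$ of objects with $\lambda$-weights on $Z$ in $[w,w+\eta]$, the embeddings $\iota_\pm$ given (as in Lemma~\ref{proposition.iota_adjoints}) by pulling back along the attracting fibrations $\pi_\pm$ and pushing forward, the identification of the right orthogonals $(\operatorname{Im}\iota_\pm)^\perp$ with the windows $\cG_\pm$ via Proposition~\ref{proposition sod}, and the equivalences $\operatorname{res}_+\compose\operatorname{res}_-^*$, $\operatorname{res}_-\compose\operatorname{res}_+^*$ coming from Theorem~\ref{theorem.splittings_of_res}. However, there is a genuine gap at exactly the point you flag as "the main obstacle": you never prove that the remaining two compositions of Definition~\ref{definition sph pair}, namely $\iota_+^*\compose\iota_-$ and $\iota_-^*\compose\iota_+$ between the weight subcategories of $\D(Z/L)$, are equivalences; you only assert that this "should reduce to an explicit Koszul-resolution computation in the local model near $Z$" and concede that transporting such a computation back to the global setting is unclear. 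This is the heart of the theorem, and the paper's proof of it (Lemma~\ref{lemma compute compositions}) is global, not local: for $G$ abelian the strata coincide with the blades $Y_\pm$, which by the weight splitting of $\Omega_X|_Z$ (using assumption~(\ref{keytheorem assumption 1}) and Proposition~\ref{proposition stratification props}) meet transversally in $Z$; one then needs $G$-equivariant derived base change around the resulting Cartesian square (justified via Bernstein--Lunts resolutions), the Arinkin--C\u{a}ld\u{a}raru description of $\sigma_-^*\sigma_{-*}$ as tensoring with $\bigoplus_k \bigwedge^k \cN^\vee_Z Y_-[k]$ (the normal sequence splits because $\pi_-\sigma_-=\Id$), and finally the strict positivity of the $\lambda_+$-weights of $\cN^\vee_{Y_+}X|_Z$ to see that after weight truncation only the $k=0$ term survives, so the composition is the identity up to the line-bundle twist $\det\cN_{S_\pm}X|_Z$ and shift. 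None of these ingredients appears in your proposal, and without them the spherical-pair axioms are not verified.

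Two smaller inaccuracies: the role of assumption~(\ref{keytheorem assumption 2}) is not a "Serre-duality symmetry inside $\cC$" but the concrete equality $\eta_+=\eta_-$ of window widths, obtained by taking determinants in the splitting $\Omega_X|_Z\cong\cN^\vee_{Y_-}X|_Z\oplus\Omega_Z\oplus\cN^\vee_{Y_+}X|_Z$; this is what makes the single category $\cC$ serve both linearizations and makes the windows interleave ($\cG_-^{w}=\cG_+^{-w-\eta+1}$, $\cG_+^{-w-\eta}=\cG_-^{w+1}$), which is why the $\operatorname{res}$-compositions are equivalences. And in part~(\ref{keytheorem 3}) the spherical functor is $\sphFun=\operatorname{res}_+\compose\iota_-$, i.e.\ $\iota_-$ followed by the \emph{left} adjoint of $\epsilon_+=\operatorname{res}_+^*$ (the recipe ${}^*\gamma_+\compose\delta_-$ of \cite{KS2}), not the right adjoint as you state; these adjoints differ in general, so the slip matters if one wants the twist of $\sphFun$ to recover the composed half-monodromies.
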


\begin{keyremark}In many case, the quotients $ X\gitQuotStabBrief{\pm}$ corresponding to a wall crossing are related by a flip. This follows for instance if $G=\mathbb{C}^*$ and the unstable loci have codimension at least two~\cite[Proposition~1.6]{Thaddeus}. Assumption~(\ref{keytheorem assumption 2}) then asserts that this flip is in fact a flop.
\end{keyremark}

\begin{keyremark} It may seem strange to use the notation~$\D(Z/L)$ when $G$~is abelian, as the Levi subgroup $L$ must coincide with $G$ in this case. The notation is taken from Halpern-Leistner and Shipman: we retain it for convenience because, firstly, they give useful alternative descriptions of the categories~$\D(Z/L)^w$ \cite[\mbox{Section~2.1}]{HLShipman} and, secondly, we will have cause to slightly amend one of their lemmas (Lemma~\ref{proposition.iota_adjoints}).
\end{keyremark}

We will see that taking complexified Grothendieck groups of a spherical pair $\cP$ gives a perverse sheaf of vector spaces on a disk. The following theorem shows that this is an intersection cohomology complex when its monodromy satisfies an appropriate condition.

\begin{keytheorem}\label{keytheorem Ktheory} With the setup above, the following hold.
\begin{enumerate}
\item{} \emph{(Proposition \ref{proposition.K_theory})} There exists a perverse sheaf of vector spaces~${}^{\K}\cP$ naturally associated to $\cP$ with generic fibre the complexified Grothendieck group $\K(X \gitQuotStabBrief{+})$; and
\item{} \emph{(Theorem \ref{theorem.Ktheory})} Writing $m$ for the monodromy action on $\K(X \gitQuotStabBrief{+})$, ${}^{\K}\cP$ is an intersection cohomology complex of a local system on the punctured disk if and only if
\begin{equation}\tag{$\ast$}\label{equation equality}
 \operatorname{rk} (m - \Idmatrix) = \operatorname{dim} \Kbig{ \D(Z/L )^w }.
 \end{equation}
\end{enumerate}
\end{keytheorem}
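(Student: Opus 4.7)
For part~(1), my approach is to apply the complexified Grothendieck group functor $\K$ levelwise to the data of~$\cP$. The defining axioms of a spherical pair of triangulated categories --- certain functors being equivalences, certain cone triangles splitting --- should decategorify cleanly to the corresponding axioms for a spherical pair of vector spaces, which by the Kapranov--Schechtman dictionary of~\cite{KS2} is equivalent to a perverse sheaf of vector spaces on the disk, singular at the origin, with skeleton given by the two arcs. The generic fiber is then identified as $\K(X \gitQuotStabBrief{+})$ via the embedding $\epsilon_+$ of part~(\ref{keytheorem 2}), or equivalently as $\K(X \gitQuotStabBrief{-})$ via $\epsilon_-$, the two being naturally identified by the derived equivalence $\D(X \gitQuotStabBrief{-}) \simeq \D(X \gitQuotStabBrief{+})$ descending to K-theory.

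For part~(2), I would convert ${}^{\K}\cP$ to the standard quiver presentation $(\Psi, \Phi, \mathrm{can}, \mathrm{var})$ of a perverse sheaf on a disk with singularity at zero, where $\Psi$ denotes nearby cycles, $\Phi$ vanishing cycles, and the monodromy $T$ satisfies $T = \Id_\Psi + \mathrm{var} \circ \mathrm{can}$. In the present setting $\Psi = \K(X \gitQuotStabBrief{+})$ and $T = m$. A perverse sheaf on a disk is the IC extension of a local system on the punctured disk precisely when it has no nonzero subobject or quotient supported at the origin; in the quiver description this is the condition that $\mathrm{var}$ is injective and $\mathrm{can}$ is surjective. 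Since $\mathrm{rk}(m - \Idmatrix) = \mathrm{rk}(\mathrm{var} \circ \mathrm{can}) \leq \dim \Phi$ with equality exactly when both hold, the IC condition reduces to the rank equality $\mathrm{rk}(m - \Idmatrix) = \dim \Phi$.

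The remaining and main technical step --- the principal obstacle --- is to identify $\dim \Phi$ with $\dim \K(\D(Z/L)^w)$. Here I would exploit the two parallel structures furnished by Theorem~\ref{keytheorem}: the spherical pair of part~(\ref{keytheorem 1}) with embeddings $\iota_\pm$ of $\D(Z/L)^{w}$ and $\D(Z/L)^{w+\eta}$, and the dual pair of part~(\ref{keytheorem 2}) with embeddings $\epsilon_\pm$ of $\D(X \gitQuotStabBrief{\pm})$. Under the KS dictionary the nearby cycles decategorify from the embeddings $\epsilon_\pm$, while the vanishing cycles decategorify from the $\iota_\pm$, yielding $\Phi \cong \K(\D(Z/L)^w)$. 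An alternative check is provided by Corollary~\ref{corollary sph functor}: the spherical functor $\sphFun \colon \cD \to \D(X \gitQuotStabBrief{+})$ with $\cD = \D(Z/L)^w$ induces on K-theory a factorisation of $m - \Idmatrix$ through $\K(\cD)$, which must agree with the $\mathrm{var} \circ \mathrm{can}$ factorisation above. Once this identification is in hand, condition~(\ref{equation equality}) is exactly the IC criterion, completing the proof.
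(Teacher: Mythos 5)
Your proposal is correct and takes essentially the same route as the paper: part (1) is Proposition~\ref{proposition.K_theory} (apply $\K(\placeholder)$ and verify the Kapranov--Schechtman conditions of Proposition~\ref{proposition.KS_perverse_description}), and for part (2) both arguments reduce the IC condition to the rank equality by identifying the dimension of the vanishing cycles of ${}^{\K}\cP$ with $\dim\K(\cC)-\dim\K(X\gitQuotStab{+}G)=\dim\Kbig{\D(Z/L)^w}$ via the semi-orthogonal decomposition of Proposition~\ref{proposition sod}, with $m-\Idmatrix$ factoring through $\K\big(\D(Z/L)^w\big)$ by the spherical-functor description of the monodromy. The only difference is cosmetic: you derive the IC criterion from the characterization ``no subobject or quotient supported at $0$'', i.e.\ $\operatorname{can}$ surjective and $\operatorname{var}$ injective, whereas Theorem~\ref{theorem.Ktheory} counts skyscraper composition factors using additivity of the vanishing-cycle dimension $\dim_0$ and Proposition~\ref{proposition.simple_perverse_sheaves}; the two arguments are equivalent.
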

I show in Proposition~\ref{proposition.very_balanced_odd_codim} that a sufficient condition for (\ref{equation equality}) to be satisfied is that the codimension of $Z$ in~$X$ is odd, and both spaces are $G$-equivariantly Calabi--Yau. Nevertheless, (\ref{equation equality}) fails in many simple examples, including an Atiyah flop of a resolved conifold (Section~\ref{section resolved conifold}).

\subsection{Categorified intersection cohomology}

I view the construction of a spherical pair in Theorem~\ref{keytheorem} as an instance of taking `categorified intersection cohomology'. Namely, the equivalences $\cE_- \!\leftrightarrow \cE_+$ naturally define a local system of categories on the punctured disk, and the spherical pair $\cP$ extends this to a perverse sheaf of categories on the whole disk, thus categorifying the way in which intersection cohomology extends local systems of vector spaces to perverse sheaves of vector spaces. Theorem~\ref{keytheorem Ktheory} shows that, under assumption~(\ref{equation equality}), decategorification works as expected. A construction of Segal~\cite{Seg2}, which takes a general autoequivalence $\Psi\colon\cE \rightarrow \cE$ and expresses it as a twist of a spherical functor, may likewise be viewed as an instance of categorified intersection cohomology.

\subsection{Fukaya categories}\label{intro ms}

I briefly mention recent work using perverse schobers to understand Fukaya categories: it would be interesting to link the present paper to some of these studies. \KandS{}'s program is related to Kontsevich's proposal to localise Fukaya categories along singular Lagrangian skeleta, building on work of Seidel~\cite{Seidel}: see~\cite{Kontsevich-Symplectic,Kontsevich-Talk10}, and later~\cite{Kontsevich-Talk15}. This proposal has been extensively pursued by Nadler~\cite{NadlerArb}. Dyckerhoff, Kapranov, Schechtman, and Soibelman~\cite{DKSS} are currently developing a theory of perverse schobers on general Riemann surfaces, following previous work by Dyckerhoff and Kapranov studying topological Fukaya categories of surfaces~\cite{DK}, and by Pascaleff and Sibilla for punctured surfaces~\cite{PS}. Soibelman has discussed perverse schobers in relation to wall-crossing in the stability space associated to the Fukaya category~\cite{Soibelman}. Perverse schobers have appeared furthermore in work of Nadler where they are used, for instance, to understand the Landau-Ginzburg A-model on $\mathbb{C}^n$ with superpotential~$ z_1 \dots z_n $~\cite{Nadler}.

\begin{acknowledgements}I am grateful to M.~Kapranov for many enlightening conversations and valuable suggestions; to V.~Schechtman for helpful explanations of his work; to H.~Iritani for his hospitality and insights in Kyoto; and to T.~Dyckerhoff for his hospitality in Bonn, and his patient explanations of aspects of \cite{DKSS}. I thank E.~Co\c{s}kun, \"{O}.~Gen\c{c}, and \"{O}. Ki\c{s}isel for their warm welcome at Middle East Technical University in Ankara, where I made crucial progress. This project has benefited from numerous discussions over the years with A.~Bondal, E.~Segal, R.~Thomas, and M.~Wemyss. I would also like to thank N.~Addington for helpful comments.\end{acknowledgements}

\newpage
\section{Simple cases}
\label{section simple}
In this section I outline my construction and results in some concrete examples, namely an orbifold flop of local $\mathbb{P}^1$, and an Atiyah flop of a resolved conifold.

\subsection{Perverse sheaves}\label{section diff eqn} These may be thought of as generalizations of the sheaves of solutions to differential equations. As motivation therefore, take a complex number~$c$ and consider the equation \[z \frac{df}{dz} = c f\] where $f$ is a function of a variable $z$ taking values in the  punctured complex disk~$\Delta - 0$. It has a local solution $f(z) = z^c$, however for non-integer~$c$ this solution has monodromy around~$0$, so global solutions are sections of a local system~$L$. This local system may be described by its fibres $E_{\pm}$ at $\pm 1$, along with half-monodromy morphisms between them.

To describe a perverse sheaf $P$ extending $L$ which is defined on all of $\Delta$, though possibly singular at $0$, we may proceed as follows. Take the sheaf of (hyper)cohomology with support $\mathbb{H}^1_K(P)$ on a cut $K$ as in Figure~\ref{figure perverse sheaf}, and write $E_\pm$  and $E_0$ for its stalks at~$\pm 1$~and~$0$ respectively. There are natural maps $E_0 \to E_\pm$, and dual maps in the reverse direction, as shown. These maps satisfy conditions given in Proposition~\ref{proposition.KS_perverse_description}: a spherical pair is then given by a diagrams of categories satisfying analogous conditions given in Definition~\ref{definition sph pair}.

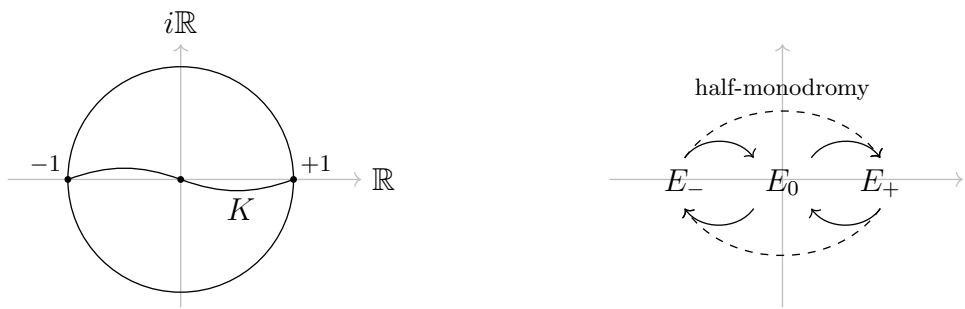
\begin{figure}[h]
\begin{center}
\begin{tikzpicture}
 \node at (0,0) {$\cutPic{$0$}{$-1$}{$+1$}{$K$}$};
 \node at (8,0) {$\monodromyPic{$E_{0\vphantom{+}}$}{$E_{-\vphantom{+}}$}{$E_+$}{half-monodromy}{0}{0}$};
\end{tikzpicture}
\end{center}
\caption{Perverse sheaf on~$\Delta$.}\label{figure perverse sheaf}
\end{figure}

\subsection{Example: local $\mathbb{P}^1$}

Take a $3$-dimensional vector space $X$ with coordinates $(x_1,x_2,y)$, and a $\mathbb{C}^*$-action with weights $(1,1,-2)$, whose orbits are shown in Figure~\ref{figure orbits}. There are two GIT quotients $X \gitQuotStabBrief{\pm}$ of this action, namely:

\begin{enumerate}
\item[\customlabel{quotient 1}{$-$}{($-$)}] the total space $\operatorname{Tot}\!\big(\cO_{\mathbb{P}^1}(-2)\big)$ of a line bundle on $\mathbb{P}^1$; and
\item[\customlabel{quotient 2}{$+$}{($+$)}] an orbifold $\mathbb{C}^2 / C_2$, where the cyclic group $C_2$ acts by $\pm\Idmatrix$.
\end{enumerate}
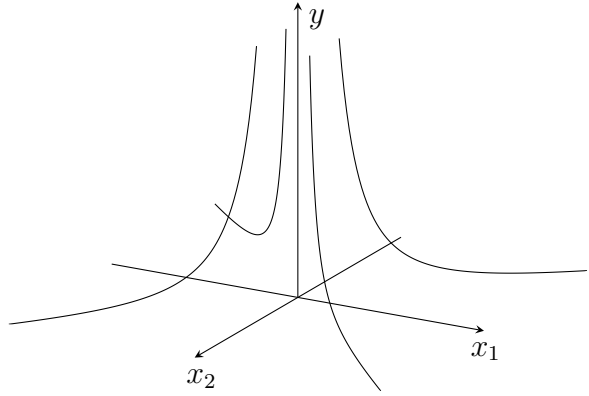
\begin{figure}[h]
\begin{center}
\begin{tikzpicture}[scale=1,line width=0.75pt]
  \begin{axis}[
    view       = {-29}{-25},
    axis lines = middle,
    zmax       = 15,
    height     = 8cm,
    xtick      = \empty,
    ytick      = \empty,
    ztick      = \empty,
    xlabel  = {$x_1$},
    ylabel  = {$\,\,\,x_2$},
    zlabel  = {$y$},
    x label style={at={(axis cs:5,0)},anchor=north},
    y label style={at={(axis cs:0,5)},anchor=north},
  ]
\foreach \s in {0,...,3}
{
  \addplot3+ [
    ytick      = \empty,
    yticklabel = \empty,
    domain     = 1:7,
    samples    = 100,
    samples y  = 0,
    mark       = none,
    black,
  ]
  ( {x*sin(45+90*\s)},{x*cos(45+90*\s)},{13/x^2});
}
  \end{axis}
\end{tikzpicture}
\end{center}
\caption{Orbits in $X$.}\label{figure orbits}
\end{figure}
These quotients arise as open substacks of the quotient stack $X/\mathbb{C}^*$, so we may consider restriction functors as follows.
\[ \operatorname{res}_\pm\colon \D(X/\mathbb{C}^*) \to \D(X \gitQuotStabBrief{\pm} ) \]
It was observed by Segal~\cite{Seg}, following ideas of Kawamata, and Herbst--Hori--Page~\cite{HHP}, later developed into a general theory in~\cite{HL,BFK}, that these are equivalences on certain `window' subcategories
\[ \cW_k = \big\langle\cO(k), \cO(k+1)\big\rangle \]
in $\D(X/\mathbb{C}^*)$, generated by weight line bundles $\cO(k)$ on $X/\mathbb{C}^*$. The key observation for us is that by taking a slightly larger window, for instance
\[ \cC = \big\langle\cO(-1), \cO, \cO(1)\big\rangle, \]
we may then produce a diagram of categories as above, by composing the obvious embeddings of the $\cW_k$ with equivalences, as follows.
\begin{center}
\begin{tikzpicture}[node distance=1cm, auto, line width=0.5pt]
 \node (mid) at (0,0) {$\mathcal{C}$};
 \node (pos) at (-1.5,0)  {$\cW_{-1}$};
 \node at (-2.3,0)  {$\cong$};
 \node at (-3.4,0)  {$\D(X \gitQuotStabBrief{-} )$};
 \node (neg) at (+1.5,0) {$\cW_{0}$};
 \node at (+2.2,0)  {$\cong$};
 \node at (+3.3,0)  {$\D(X \gitQuotStabBrief{+} )$};

 \draw[->, bend left=\arrowbend] (pos.north) to (mid.north west);
 \draw[->, bend left=\arrowbend] (neg.south) to (mid.south east);
\end{tikzpicture}
\end{center}
Functors in the reverse direction are given by adjoints, sketched in Figure~\ref{figure perverse 2dim}.
\vspace{-1cm}

\begin{figure}[h]
\begin{center}
\begin{tikzpicture}
 \node at (0,0) {$\monodromyPic{$\mathcal{C}\vphantom{\gitQuot}$}{$\D(X \gitQuotStabBrief{-} )$}{$\D(X \gitQuotStabBrief{+} )$}{flop functor}{\labelposrelax}{\axisposrelax}$};
 \node at (8,0) {$\monodromyPic{$\mathcal{C}$}{$\big\langle\cO_y(-1)\big\rangle$}{$\big\langle\cO_{\vec{x}}(1)\big\rangle$}{}{\labelposrelax}{\axisposrelax}$};
\end{tikzpicture}
\end{center}
\caption{Spherical pair for local $\mathbb{P}^1$.}\label{figure perverse 2dim}
\end{figure}
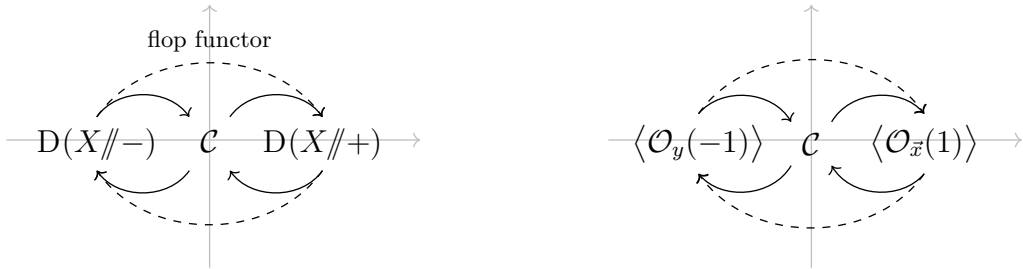

The orthogonal subcategories ${}^\perp \cW_{-1}$ and ${}^\perp \cW_{0}$ in $\cC$ are generated by objects $\cO_y(-1)$ and $\cO_{\vec{x}}(1)$ supported respectively on the $y$-axis, and the $(x_1,x_2)$-plane: they yield the right-hand diagram in Figure~\ref{figure perverse 2dim} which gives a spherical pair $\cP$ in the sense of \KandS{}: the left-hand diagram satisfies dual axioms which are made precise in Corollary~\ref{corollary dual pair}.

Taking complexified Grothendieck groups of the left-hand diagram in Figure~\ref{figure perverse 2dim}, we obtain a perverse sheaf of vector spaces $P={}^{\K}\cP$. This has a nice description as follows. By composing arrows in the diagram we obtain a flop--flop functor~$\mathsf{FF}$ acting on $\D(X \gitQuotStabBrief{+})$, and thence an endomorphism of the complexified Grothendieck group~$\K(X \gitQuotStabBrief{+})$ which determines a local system of vector spaces on the punctured disk~$\Delta-0$: the intersection cohomology complex of this local system recovers~$P$.  This result is explained and generalized in Section~\ref{section intersection cohomology}.

\subsection{Example: Resolved conifold}\label{section resolved conifold}

Now I briefly give a $3$-fold example, where the relation to intersection cohomology is less straightforward. Take a $4$-dimensional vector space $X$ with coordinates $(x_1,x_2,y_1,y_2)$ and a $\mathbb{C}^*$-action with weights $(1,1,-1,-1)$. The two GIT quotients are then: 
\begin{enumerate}
\item[\customlabel{threefold quotient 1}{$-$}{($-$)}] the resolved conifold $\operatorname{Tot}\!\big(\cO_{\mathbb{P}^1}(-1)^{\oplus 2}\big)$; and
\item[\customlabel{threefold quotient 2}{$+$}{($+$)}] its flop along the zero section $\mathbb{P}^1$.
\end{enumerate}
We again obtain a spherical pair, similarly to above. The categories~$\cW_k$ and~$\cC$ are defined in just the same way. The orthogonals are now generated by~$\cO_{\vec{y}}(-1)$ and~$\cO_{\vec{x}}(1)$ supported respectively on the $(y_1,y_2)$-plane, and~$(x_1,x_2)$-plane.

In this case, however, taking complexified Grothendieck groups does not give the intersection cohomology complex of a local system on the punctured disk $\Delta-0$: the reason is that the flop--flop functor acts trivially at the level of Grothendieck groups. See Example~\ref{example standard flop} for details. 

\begin{remark}This example is the simplest case where Bodzenta--Bondal construct a spherical pair~\cite{BB}: it would be interesting to relate our constructions.\end{remark}

\begin{remark}The examples in this section have natural extensions to higher dimensions, namely to an orbifold flop of local $\mathbb{P}^{\mathrm{odd}}$ (Example~\ref{example orbifold}), and to standard flops (Example~\ref{example standard flop}).\end{remark}

\section{Background}

\subsection{Perverse sheaves}
\label{subsection.perv_sheaves}

This subsection summarizes results on perverse sheaves of vector spaces which will be used later. We first recall a  standard description of the category $\operatorname{Per} (\Delta,0)$ of perverse sheaves on a disk $\Delta \subset \mathbb{C}$ containing $0$, and possibly singular there.

\begin{proposition}\label{proposition.GGM_perverse_description} \cite{Beilinson1984} \cite[Th\'eor\`eme~II.2.3]{GGM} There is an equivalence from $\operatorname{Per} (\Delta,0)$ to the category of diagrams of vector spaces
\[
\begin{tikzpicture}
	\node (zero) at (0,0) {$D_0$};
	\node (plus) at (2,0) {$D_1$};
	\draw[->,transform canvas={yshift=+\arrowsplit}] (plus) to  node[above] {$\scriptstyle u $} (zero);
	\draw[<-,transform canvas={yshift=-\arrowsplit}] (plus) to  node[below] {$\scriptstyle v $} (zero);
\end{tikzpicture}
\]
such that the morphism $m = v u + \Idmatrix$ is an isomorphism.\end{proposition}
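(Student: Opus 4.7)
I will prove this classical result following Beilinson~\cite{Beilinson1984} and~\cite{GGM} by constructing both functors of the equivalence using nearby and vanishing cycles at~$0$, which I denote~$\Psi$ and~$\Phi$ respectively.

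First, I define the functor from $\operatorname{Per}(\Delta,0)$ to the diagram category. For $P \in \operatorname{Per}(\Delta,0)$, the perversity condition on a curve forces $\Psi(P)$ and $\Phi(P)$ to be concentrated in a single cohomological degree, so that both may be identified with ordinary vector spaces. They come equipped with canonical and variation morphisms $\operatorname{can}\colon \Psi(P) \to \Phi(P)$ and $\operatorname{var}\colon \Phi(P) \to \Psi(P)$ whose composition satisfies $\operatorname{var} \circ \operatorname{can} = T - \Idmatrix$, where $T$ denotes the monodromy operator on $\Psi(P)$. Setting $D_1 = \Psi(P)$, $D_0 = \Phi(P)$, $u = \operatorname{can}$ and $v = \operatorname{var}$ produces a diagram of the required form, and $m = vu + \Idmatrix = T$ is automatically an isomorphism since monodromy is invertible. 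Functoriality is inherited from that of $\Psi$ and $\Phi$.

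Next, I construct the inverse functor by gluing. Given a diagram $(D_0, D_1, u, v)$ with $m$ invertible, the data $(D_1, m)$ determine a local system $L$ on the punctured disk $\Delta^* = \Delta - \{0\}$. Writing $j\colon \Delta^* \hookrightarrow \Delta$ and $i\colon \{0\} \hookrightarrow \Delta$, I apply Beilinson's gluing: the maximal unipotent extension $\Xi(L)$ assembles with $i_* D_0$ and the maps $u, v$ into a perverse sheaf on $\Delta$ whose nearby and vanishing data recover $(D_1, D_0, u, v)$. Concretely, the resulting sheaf interpolates between $j_! L$ and $j_* L$ via the auxiliary vector space $D_0$.

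Finally, I verify that these two functors are mutually quasi-inverse. This reduces to the standard distinguished triangles relating $i^* P$, $i^! P$, $\Psi(P)$ and $\Phi(P)$, which together reconstruct any $P \in \operatorname{Per}(\Delta,0)$ (and morphisms between such) from the diagram data. The main obstacle is the gluing step: one must show that the glued complex lies in the heart of the perverse t-structure on $\Delta$, and that the assignment is functorial in a way that inverts $(\Psi, \Phi, \operatorname{can}, \operatorname{var})$. This is the technical core, and was carried out in~\cite{Beilinson1984}; for our purposes it is cleanest to invoke it directly rather than recapitulate.
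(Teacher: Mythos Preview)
Your outline is correct and follows the standard Beilinson approach. Note, however, that the paper does not actually prove this proposition: it is stated as a cited result from \cite{Beilinson1984} and \cite{GGM}, with only the one-sentence remark that $D_0$ is vanishing cycles and $D_1$ is nearby cycles, so that $m$ is realized as monodromy on $D_1$. Your sketch is entirely consistent with that remark, and there is nothing further to compare.
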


The above equivalence may be obtained by letting $D_0$ be vanishing cycles, and $D_1$~be nearby cycles in such a way that the morphism $m$ is realized as a monodromy operator acting on $D_1$.

We now give an alternative description of $\operatorname{Per} (\Delta,0)$, due to \KS{}: it is a special case of their remarkable characterization of perverse sheaves on a complex vector space singular with respect to a real hyperplane arrangement~\cite{KS1}. Their categorification of this description is given in the following Section~\ref{subsection.sph_pair}.

\begin{proposition}\label{proposition.KS_perverse_description} \cite[Section~9]{KS1} There is an equivalence from $\operatorname{Per} (\Delta,0)$ to the category of diagrams of vector spaces
\[
\begin{tikzpicture}
	\node (zero) at (0,0) {$E_0$};
	\node (plus) at (2,0) {$E_+$};
	\node (minus) at (-2,0) {$E_-$};
	\draw[->,transform canvas={yshift=+\arrowsplit}] (minus) to  node[above] {$\scriptstyle f_- $} (zero);
	\draw[<-,transform canvas={yshift=-\arrowsplit}] (minus) to  node[below] {$\scriptstyle g_- $} (zero);
	\draw[->,transform canvas={yshift=+\arrowsplit}] (plus) to  node[above] {$\scriptstyle f_+ $} (zero);
	\draw[<-,transform canvas={yshift=-\arrowsplit}] (plus) to  node[below] {$\scriptstyle g_+ $} (zero);
\end{tikzpicture}
\]
such that
\begin{enumerate}
\item\label{proposition.KS_perverse_description 1} $g_\pm  f_\pm = \Idmatrix$, and
\item\label{proposition.KS_perverse_description 2} $g_\pm  f_\mp$ are isomorphisms.
\end{enumerate}
There exists furthermore an equivalence from the description of Proposition~\ref{proposition.GGM_perverse_description}, by taking diagrams as follows.
\[
\begin{tikzpicture}
	\node (zero) at (0,0) {$D_0 \oplus D_1$};
	\node (plus) at (3,0) {$D_1$};
	\node (minus) at (-3,0) {$D_1$};
	\draw[->,transform canvas={yshift=+\arrowsplit}] (minus) to  node[above] {$\scriptstyle \left(\begin{smallmatrix}u \\ \Idmatrix\end{smallmatrix}\right) $} (zero);
	\draw[<-,transform canvas={yshift=-\arrowsplit}] (minus) to  node[below] {$\scriptstyle \left(\begin{smallmatrix}0 & \Idmatrix\end{smallmatrix}\right) $} (zero);
	\draw[->,transform canvas={yshift=+\arrowsplit}] (plus) to  node[above] {$\scriptstyle \left(\begin{smallmatrix}0 \\ \Idmatrix\end{smallmatrix}\right) $} (zero);
	\draw[<-,transform canvas={yshift=-\arrowsplit}] (plus) to  node[below] {$\scriptstyle \left(\begin{smallmatrix}v & \Idmatrix\end{smallmatrix}\right) $} (zero);
\end{tikzpicture}
\]
In particular, the monodromy actions on $E_-$ and $E_+$ may be taken to be
\[m_-= g_- f_+  g_+ f_- \qquad\text{ and }\qquad m_+= g_+ f_-  g_- f_+\]
respectively.
\end{proposition}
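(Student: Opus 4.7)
The first assertion — that KS diagrams describe $\operatorname{Per}(\Delta,0)$ — is a direct specialisation of the main characterisation of perverse sheaves in~\cite{KS1} to the one-dimensional arrangement $\{0\} \subset \mathbb{C}$, so my plan is to invoke it without further work. The substantive task is therefore to verify the explicit comparison with the GGM description of Proposition~\ref{proposition.GGM_perverse_description} and to identify the monodromy action.

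First I would check that the displayed formula defines a functor $\Phi$ from the GGM category to the KS category. Row-by-column multiplication of the specified block matrices yields $g_\pm f_\pm = \Idmatrix$, $g_- f_+ = \Idmatrix$, and $g_+ f_- = vu + \Idmatrix = m$; the KS axioms~(\ref{proposition.KS_perverse_description 1}) and~(\ref{proposition.KS_perverse_description 2}) follow, the second using the invertibility of $m$ guaranteed in Proposition~\ref{proposition.GGM_perverse_description}. On morphisms the action of $\Phi$ is forced, namely $(\phi_0,\phi_1) \mapsto (\phi_1,\,\phi_0 \oplus \phi_1,\,\phi_1)$.

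Next I would construct an essential inverse $\Psi$. Using the isomorphism $g_-f_+\colon E_+\to E_-$ from axiom~(\ref{proposition.KS_perverse_description 2}), I identify the two outer vector spaces and set $D_1 := E_+$. A short argument then gives the canonical splitting $E_0 = f_+(E_+) \oplus \ker(g_-)$: the intersection is trivial since $g_-f_+$ is an iso, and for any $e \in E_0$ the element $y := (g_-f_+)^{-1}(g_-(e))$ satisfies $e - f_+(y) \in \ker(g_-)$. Setting $D_0 := \ker(g_-)$ yields an isomorphism $E_0 \cong D_0 \oplus D_1$ which by construction puts $f_+$ and $g_-$ in the form required by the proposition. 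I would then define $u := f_- - f_+ \circ (g_-f_+)^{-1} \colon D_1 \to D_0$ (verifying that it lands in $\ker(g_-)$ by applying $g_-$) and $v := g_+|_{\ker(g_-)} \colon D_0 \to D_1$, and verify that under these identifications $f_-$ and $g_+$ take the required matrix form and that $vu + \Idmatrix = g_+f_-$ is invertible by axiom~(\ref{proposition.KS_perverse_description 2}). Checking that $\Phi\Psi \cong \Id$ and $\Psi\Phi \cong \Id$ naturally is then a bookkeeping exercise resting on uniqueness of splittings.

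For the monodromy formulas I would substitute the matrices directly: in $E_0 = D_0 \oplus D_1$ one has $f_-(y) = (uy,y)$, then $g_+(uy,y) = my$, then $f_+(my) = (0,my)$, and finally $g_-(0,my) = my$, so $g_-f_+g_+f_- = m$ on $E_-$; a symmetric calculation gives $g_+f_-g_-f_+ = m$ on $E_+$, recovering the stated monodromy action on $D_1 \cong E_\pm$. The main conceptual obstacle is the construction of $\Psi$, where choosing the natural splitting of $E_0$ and the identification $E_- \cong E_+$ via $g_-f_+$ requires some care; without such choices one recovers GGM data only up to a non-canonical automorphism, but once the choices are fixed, the rest is direct verification.
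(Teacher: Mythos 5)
Your proposal is correct, but it takes a more self-contained route than the paper: the paper's entire proof is a citation, deferring the first assertion to \cite[Section~9]{KS1} and the comparison with Proposition~\ref{proposition.GGM_perverse_description} to \cite[Proposition~9.4]{KS1}, whereas you reprove that comparison by hand. Your verification is sound: the block-matrix computations give $g_\pm f_\pm = \Idmatrix$, $g_-f_+ = \Idmatrix$ and $g_+f_- = vu + \Idmatrix = m$, so the displayed diagram satisfies the axioms; and for the quasi-inverse, the splitting $E_0 = f_+(E_+) \oplus \ker(g_-)$ is indeed canonical (your argument using that $g_-f_+$ is an isomorphism is exactly right), the identification $E_-\cong E_+$ via $g_-f_+$ is natural in morphisms of diagrams, and the recovered $u$, $v$ satisfy $vu+\Idmatrix = g_+f_-$, invertible by axiom~(\ref{proposition.KS_perverse_description 2}). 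The only slip is notational: your $u := f_- - f_+\circ(g_-f_+)^{-1}$ has domain $E_-$, so it must be precomposed with the identification $g_-f_+\colon E_+\to E_-$ to be a map $D_1\to D_0$; this is harmless bookkeeping. Your monodromy substitution $g_-f_+g_+f_- = m = g_+f_-g_-f_+$ matches the statement. What the two approaches buy: the paper's citation is economical and credits the result to \KS{}, who prove it as part of a much more general hyperplane-arrangement theorem; your direct linear-algebra argument makes the disk case self-contained, makes visible why the identification $E_0\cong D_0\oplus D_1$ is non-canonical (as the paper's subsequent remark notes), and exhibits explicitly the monodromy formula that is used later in Propositions~\ref{proposition.K_theory} and~\ref{proposition.spherical_functor_monodromy} and Theorem~\ref{theorem.Ktheory}.
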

\begin{proof}For the statement on the equivalence with the description of Proposition~\ref{proposition.GGM_perverse_description}, see \cite[Proposition~9.4]{KS1}.
\end{proof}

For $P$ in $\operatorname{Per} (\Delta,0)$ the spaces $E_{\pm}$ and $E_0$ above are given by stalks at~$\pm 1$ and~$0$ respectively of the sheaf $\mathbb{H}^1_K(P)$ of cohomology with support on a skeleton~$K$ as discussed in Section~\ref{section diff eqn}. The maps $g_\pm$ are generalization maps, as described for instance in \cite[Section~1D]{KS1}, and the $f_\pm$ are dual to them.

\begin{remark} By Proposition~\ref{proposition.KS_perverse_description}, $E_0$ is isomorphic to the direct sum of the nearby and vanishing cycles of the perverse sheaf, however this isomorphism is not canonical: see~\cite[Proof of Proposition~9.4]{KS1}. \end{remark}

\begin{proposition}\label{proposition.simple_perverse_sheaves} The simple objects of $\operatorname{Per} (\Delta,0)$ are of the form:
\begin{enumerate}
\item an intersection cohomology complex $\operatorname{IC}(L)$, for  an irreducible local system~$L$ on~$\Delta-0$; or
\item a sheaf $\underline{\field}_0[-1]$.
\end{enumerate}
Under the equivalence of Proposition~\ref{proposition.GGM_perverse_description} these correspond to:
\begin{enumerate}
\item for $F$ a fibre of $L$ with monodromy $m$, and $F^m$ the $m$-invariants, a~diagram
\[
\begin{tikzpicture}
	\node (zero) at (0,0) {$F/F^m$};
	\node (plus) at (2.1,0) {$F;$};
	\draw[->,transform canvas={yshift=+\arrowsplit}] (plus) to  (zero); 
	\draw[<-,transform canvas={yshift=-\arrowsplit}] (plus) to   (zero); 
\end{tikzpicture}
\]

\item a diagram
\[
\begin{tikzpicture}
	\node (zero) at (0,0) {$\phantom{.}\mathbb{C}$};
	\node (plus) at (1.6,0) {$0.$};
	\draw[->,transform canvas={yshift=+\arrowsplit}] (plus) to  (zero); 
	\draw[<-,transform canvas={yshift=-\arrowsplit}] (plus) to   (zero); 
\end{tikzpicture}
\]
\end{enumerate}

\begin{proof}
The simples perverse sheaves on a variety~$V$ are shifted intersection cohomology complexes, namely $\operatorname{IC}(L)[-\operatorname{codim}_V W]$, for $L$ an irreducible local system on an open subset of a closed $W \subseteq V$, by \cite{BBD1982}: for another reference see \cite[Theorem~6.9]{Rietsch}. The first part follows by setting $V=\Delta$, and $W=\Delta$~or~$0$ respectively. The second part then follows from a well-known computation of nearby and vanishing cycles. 
\end{proof}
\end{proposition}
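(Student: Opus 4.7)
My plan is to combine the general Beilinson--Bernstein--Deligne (BBD) classification of simple perverse sheaves with an explicit computation of nearby and vanishing cycles. For the first assertion, I would invoke BBD: simple perverse sheaves on any complex algebraic variety $V$ are, up to isomorphism, the intermediate extensions $j_{!*}(L[\dim_{\mathbb{C}} W])$, where $W\subseteq V$ is an irreducible smooth locally closed stratum, $j\colon W\hookrightarrow V$ is the inclusion, and $L$ is an irreducible local system on $W$. Specialising to $V=\Delta$ stratified by $\Delta-0$ and $\{0\}$ yields exactly two classes of simples: intermediate extensions $\operatorname{IC}(L)$ of irreducible local systems on $\Delta-0$, and shifted skyscrapers at the origin, which with the conventions of the paper take the form $\underline{\field}_0[-1]$.

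To translate these objects through the equivalence of Proposition~\ref{proposition.GGM_perverse_description}, I would realise it with $D_1=\psi P$ the nearby cycles, $D_0=\phi P$ the vanishing cycles, $u$ the canonical map, $v$ the variation, and $m=vu+\Idmatrix$ the monodromy action on nearby cycles. For $P=\underline{\field}_0[-1]$, supported at $0$, the nearby cycles vanish and the vanishing cycles reduce to the stalk of the underlying sheaf, giving the diagram $\mathbb{C}\leftrightarrows 0$ with both arrows zero. For $\operatorname{IC}(L)$, the nearby cycles are $\psi\operatorname{IC}(L)=F$, carrying monodromy $m$; to pin down the vanishing cycles, I would use the characterisation of $j_{!*}L$ as the unique perverse extension of $L$ having neither a subobject nor a quotient supported at $0$. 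In the GGM language these two conditions translate respectively to $v$ being injective and $u$ being surjective. Combined with the relation $vu=m-\Idmatrix$, injectivity of $v$ forces $\ker u=\ker(m-\Idmatrix)=F^m$, and surjectivity of $u$ then yields $D_0=F/F^m$ via the first isomorphism theorem, matching the asserted diagram.

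The main obstacle will be establishing that $j_{!*}L$ corresponds to $v$ injective and $u$ surjective in the GGM diagram, together with correctly reading conditions on sub- and quotient objects at~$0$ off the diagram side. I would handle this either by directly invoking Beilinson's gluing construction in \cite{Beilinson1984}, which describes $j_! L$ and $j_* L$ as explicit diagrams and realises $j_{!*} L$ as the image of the natural morphism between them, or, if a more hands-on approach is preferred, by first treating a rank one local system and then reducing the general case via a Jordan decomposition of the monodromy.
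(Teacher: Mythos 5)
Your proposal is correct and follows essentially the same route as the paper: invoke the BBD classification of simple perverse sheaves specialised to the stratification $\{\Delta-0,\,0\}$, then identify the GGM data with nearby/vanishing cycles and compute the two diagrams. The only difference is that you spell out the "well-known computation" the paper leaves implicit (via the no-sub/no-quotient-at-$0$ characterisation of $j_{!*}L$, giving $v$ injective, $u$ surjective and hence $D_0\cong F/F^m$), which is a correct and complete way to fill in that step.
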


\subsection{Spherical pairs}
\label{subsection.sph_pair}

This subsection gives categorifications of perverse sheaves of vector spaces on a disk, namely spherical pairs and functors, following~\KS{}.

Consider a $\mathbb{C}$-linear triangulated category $\cE$ with admissible subcategories $\cE_\pm$ embedded by
$ \delta_\pm \colon \cE_\pm \rightarrow \cE.$
By definition, the functors $\delta_\pm$ have left and right adjoints, denoted by ${}^*\delta_\pm$~and $\delta_\pm^*$~respectively. Considering the right orthogonals $\cE_\pm^\perp$ we also have embeddings $ \gamma_\pm \colon \cE_\pm^\perp \rightarrow \cE$
with left adjoints ${}^*\gamma_\pm$.\begin{definition}\label{definition sph pair}\cite[Definition~3.5]{KS2} 
The data $(\cE_\pm,\cE)$, along with the embeddings above, gives a \defined{spherical pair} if the following are equivalences:
\renewcommand{\theenumi}{\alph{enumi}}
\begin{enumerate}
\item \label{sph pair functor 1} $ \delta_+^* \compose \delta_- \colon \cE_- \longrightarrow \cE_+ $
\item \label{sph pair functor 2} $ \delta_-^* \compose \delta_+ \colon \cE_+ \longrightarrow \cE_- $
\item \label{sph pair functor 3} $ {}^*\gamma_+ \compose \gamma_- \colon \cE_-^\perp \longrightarrow \cE_+^\perp $
\item \label{sph pair functor 4} $ {}^*\gamma_- \compose \gamma_+ \colon \cE_+^\perp \longrightarrow \cE_-^\perp $
\end{enumerate}
\end{definition}
\noindent In other words, a spherical pair is given by semi-orthogonal decompositions
\begin{equation}\label{equation sod}
\big\langle \cE_-^\perp, \cE^{\vphantom{\perp}}_- \big\rangle = \cE = \big\langle \cE_+^\perp, \cE^{\vphantom{\perp}}_+ \big\rangle,
\end{equation}
where $\cE_\pm$ are admissible subcategories of $\cE$, and the natural compositions (\ref{sph pair functor 1})--(\ref{sph pair functor 4}) of embedding and projection functors are equivalences. Note that \KandS{} write decompositions in the reverse order~\cite[Section~3B]{KS2}.

The following is implicit in~\KS~\cite{KS2}: applying the complexified Grothendieck group construction to a spherical pair yields two perverse sheaves of vector spaces on a disk depending on whether the left- or right-hand components of (\ref{equation sod}) are taken. Write $\K_0(\cE)$ for the Grothendieck group of a triangulated category $\cE$, and $\K(\cE)$ for the \mbox{$\mathbb{C}$-algebra}~$\K_0(\cE) \otimes \mathbb{C}.$

\begin{proposition}\label{proposition.K_theory} Given a spherical pair $\cP = (\cE_\pm,\cE)$ as above, two perverse sheaves $\cP^{\K}$ and ${}^{\K}\cP$ on a disk $\Delta$ may be obtained, given as follows in the notation of Proposition~\ref{proposition.KS_perverse_description}.
\[
\begin{tikzpicture}
	\node (zero) at (0,0) {$\K(\cE)$};
	\node (plus) at (2.5,0) {$\K(\cE_+)$};
	\node (minus) at (-2.5,0) {$\K(\cE_-)$};
	\draw[->,transform canvas={yshift=+\arrowsplit}] (minus) to  node[above] {$\scriptstyle \delta_- $} (zero);
	\draw[<-,transform canvas={yshift=-\arrowsplit}] (minus) to  node[below] {$\scriptstyle \delta_-^* $} (zero);
	\draw[->,transform canvas={yshift=+\arrowsplit}] (plus) to  node[above] {$\scriptstyle \delta_+ $} (zero);
	\draw[<-,transform canvas={yshift=-\arrowsplit}] (plus) to  node[below] {$\scriptstyle \delta_+^* $} (zero);
\end{tikzpicture}
\]
\[
\begin{tikzpicture}
	\node (zero) at (0,0) {$\K(\cE)$};
	\node (plus) at (2.5,0) {$\K(\cE^\perp_+)$};
	\node (minus) at (-2.5,0) {$\K(\cE^\perp_-)$};
	\draw[->,transform canvas={yshift=+\arrowsplit}] (minus) to  node[above] {$\scriptstyle \gamma_- $} (zero);
	\draw[<-,transform canvas={yshift=-\arrowsplit}] (minus) to  node[below] {$\scriptstyle {}^*\gamma_- $} (zero);
	\draw[->,transform canvas={yshift=+\arrowsplit}] (plus) to  node[above] {$\scriptstyle \gamma_+ $} (zero);
	\draw[<-,transform canvas={yshift=-\arrowsplit}] (plus) to  node[below] {$\scriptstyle {}^*\gamma_+ $} (zero);
\end{tikzpicture}
\]
Here the symbols $\delta$ and~$\gamma$ for functors are reused for their images under~$\K(\placeholder)$.
\begin{proof}First note that $\delta_\pm^* \compose \delta_\pm = \Id_{\cE_\pm}$. This follows by the Yoneda lemma, because for instance there are functorial isomorphisms \[\Hom_{\cE_+} \!\big(a, (\delta_+^* \compose \delta_+) (b)\big) = \Hom_{\cE} \!\big(\delta_+ (a), \delta_+ (b)\big) = \Hom_{\cE_+} (a, b), \] where the latter equality holds because $\delta_+$ is an embedding. The conditions (\ref{proposition.KS_perverse_description 1})--(\ref{proposition.KS_perverse_description 2}) of Proposition~\ref{proposition.KS_perverse_description}  for $\cP^{\K}$ may then be verified immediately. The other case ${}^{\K}\cP$ follows by a dual argument. 
\end{proof}
\end{proposition}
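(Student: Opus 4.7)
The plan is to verify the two conditions of Proposition~\ref{proposition.KS_perverse_description} for the two diagrams of vector spaces claimed to give perverse sheaves. Since the $\mathbb{C}$-algebra functor $\K(\placeholder)$ sends composition of exact functors to composition of linear maps, and equivalences to isomorphisms, the work reduces to categorical statements about the spherical pair.

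First, for $\cP^{\K}$, I would show that $\K(\delta_\pm^*) \compose \K(\delta_\pm) = \K(\delta_\pm^* \compose \delta_\pm)$ is the identity on $\K(\cE_\pm)$. This reduces to the categorical claim $\delta_\pm^* \compose \delta_\pm \cong \Id_{\cE_\pm}$, which is a standard consequence of $\delta_\pm$ being a fully faithful embedding (equivalently, the counit of the adjunction is an isomorphism), and can be verified by a short Yoneda argument. Next, I would observe that $\K(\delta_\mp^*) \compose \K(\delta_\pm) = \K(\delta_\mp^* \compose \delta_\pm)$ is an isomorphism: the composition $\delta_\mp^* \compose \delta_\pm$ is literally one of the spherical pair equivalences of Definition~\ref{definition sph pair}(\ref{sph pair functor 1})--(\ref{sph pair functor 2}), so applying $\K(\placeholder)$ gives an isomorphism of Grothendieck groups, verifying condition~(\ref{proposition.KS_perverse_description 2}) of Proposition~\ref{proposition.KS_perverse_description}.

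For ${}^{\K}\cP$ the argument is formally identical, replacing the pair $(\delta_\pm, \delta_\pm^*)$ by the pair $(\gamma_\pm, {}^*\gamma_\pm)$ associated to the embeddings of the right orthogonals $\cE_\pm^\perp$ into $\cE$. Full faithfulness of $\gamma_\pm$ gives ${}^*\gamma_\pm \compose \gamma_\pm \cong \Id_{\cE_\pm^\perp}$, while conditions~(\ref{sph pair functor 3})--(\ref{sph pair functor 4}) of Definition~\ref{definition sph pair} provide the equivalences needed for the cross-compositions ${}^*\gamma_\mp \compose \gamma_\pm$. In both cases the morphism $m = vu + \Idmatrix$ of Proposition~\ref{proposition.GGM_perverse_description} will then automatically be an isomorphism since $vu$ is an isomorphism and one is free to choose the sign convention accordingly.

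There is no real obstacle here: the proof is essentially a direct translation of the spherical pair axioms into the \KS{} axioms for a perverse sheaf of vector spaces. The only care required is bookkeeping of which adjoint (left versus right, unit versus counit) ensures $\delta_\pm^* \compose \delta_\pm \cong \Id$, and matching the directions of arrows to the layout of Proposition~\ref{proposition.KS_perverse_description}.
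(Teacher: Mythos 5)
Your proposal is correct and follows essentially the same route as the paper: establish $\delta_\pm^* \compose \delta_\pm \cong \Id$ (respectively ${}^*\gamma_\pm \compose \gamma_\pm \cong \Id$) from full faithfulness via a Yoneda argument, note that the cross-compositions are exactly the spherical pair equivalences of Definition~\ref{definition sph pair}, and apply $\K(\placeholder)$ to verify conditions (\ref{proposition.KS_perverse_description 1})--(\ref{proposition.KS_perverse_description 2}) of Proposition~\ref{proposition.KS_perverse_description}, with the second case handled dually. The closing remark about $m = vu + \Idmatrix$ is unnecessary, since the \KS{} conditions alone already characterise perversity, but this does not affect correctness.
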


Spherical pairs yield spherical functors, as follows. Here we implicitly take enhanced triangulated categories, so that the functorial cones below make sense: see for instance~\cite[Appendix~A]{KS2}. 

\begin{proposition}\label{proposition.spherical_pair_to_functor} \cite[Propositions~3.7, 3.8]{KS2} Given a spherical pair $(\cE_\pm,\cE)$ as above, the functor
\[
\sphFun = {}^*\gamma_+ \compose \, \delta_- \colon \cE_-^{\vphantom{\perp}} \longrightarrow \cE_+^\perp
\]
is spherical in the sense of Anno~\cite{Anno} and Anno--Logvinenko~\cite{AL2}, and we have that:
\begin{enumerate}
\item\label{proposition.spherical_pair_to_functor 1} the twist acts on $\cE_+^\perp$ by 
\[ \twistFun_\sphFun := \operatorname{\sf Cone}\big(\,\sphFun \compose \sphFun^* \xrightarrow{\text{\,\emph{counit}\,\,}} \Id\big) \cong {}^*\gamma_+ \compose \gamma_- \compose {}^*\gamma_- \compose \gamma_+ \]
which is the composition \emph{(\ref{sph pair functor 4})} then~\emph{(\ref{sph pair functor 3})} from Definition~\ref{definition sph pair};
\item\label{proposition.spherical_pair_to_functor 2} the cotwist acts on $\cE_-$ by 
\[ \cotwistFun_\sphFun := \operatorname{\sf Cone}\big(\Id \xrightarrow{\text{\,\,\emph{unit}\,\,}} \sphFun^* \compose \sphFun\,\big)[-1]
\cong \delta^*_- \compose \delta_+ \compose \delta^*_+ \compose \delta_- \]
which is the composition \emph{(\ref{sph pair functor 1})} then~\emph{(\ref{sph pair functor 2})} from Definition~\ref{definition sph pair}.
\end{enumerate}
\end{proposition}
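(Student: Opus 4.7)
The plan is to express the twist and cotwist via the functorial triangles coming from the two semi-orthogonal decompositions~\eqref{equation sod}, and then to invoke the Anno--Logvinenko characterization of spherical functors~\cite{AL2}. First I would identify the right adjoint of $\sphFun = {}^*\gamma_+ \compose \delta_-$ as
\[
\sphFun^* = \delta_-^* \compose \gamma_+ \colon \cE_+^\perp \longrightarrow \cE_-^{\vphantom{\perp}},
\]
using that $\gamma_+$ is right adjoint to ${}^*\gamma_+$ and $\delta_-^*$ is right adjoint to $\delta_-$.

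For the cotwist, I would apply the SOD triangle associated to $\cE = \langle \cE_+^\perp, \cE_+ \rangle$, namely
\[
\delta_+ \delta_+^* \, x \longrightarrow x \longrightarrow \gamma_+ \, {}^*\gamma_+ \, x,
\]
whose first arrow is the counit of $(\delta_+, \delta_+^*)$ and whose second is the unit of $({}^*\gamma_+, \gamma_+)$, to the object $x = \delta_- a$ for $a \in \cE_-$, and then apply $\delta_-^*$. Using that $\delta_-$ is fully faithful, so that $\delta_-^* \compose \delta_- \cong \Id$, this yields a distinguished triangle
\[
\delta_-^* \delta_+ \delta_+^* \delta_- \, a \longrightarrow a \longrightarrow \sphFun^* \sphFun \, a
\]
whose second arrow is the unit of $(\sphFun, \sphFun^*)$ by naturality of units under composition of adjoints. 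Rotating this triangle and shifting by~$[-1]$ identifies $\cotwistFun_\sphFun \cong \delta_-^* \delta_+ \delta_+^* \delta_-$, which is the composition (\ref{sph pair functor 1}) then~(\ref{sph pair functor 2}) of Definition~\ref{definition sph pair}; by hypothesis both are equivalences, so $\cotwistFun_\sphFun$ is an equivalence of $\cE_-$.

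Dually, for the twist, I would apply the SOD triangle associated to $\cE = \langle \cE_-^\perp, \cE_- \rangle$ to the object $x = \gamma_+ b$ for $b \in \cE_+^\perp$, and then apply ${}^*\gamma_+$. Using ${}^*\gamma_+ \compose \gamma_+ \cong \Id$, this yields a triangle
\[
\sphFun \sphFun^* \, b \longrightarrow b \longrightarrow {}^*\gamma_+ \gamma_- \, {}^*\gamma_- \gamma_+ \, b
\]
whose first arrow is the counit of $(\sphFun, \sphFun^*)$, giving $\twistFun_\sphFun \cong {}^*\gamma_+ \gamma_- \, {}^*\gamma_- \gamma_+$, the composition (\ref{sph pair functor 4}) then~(\ref{sph pair functor 3}). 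Both are equivalences, hence so is $\twistFun_\sphFun$. In the DG-enhanced setting of~\cite[Appendix~A]{KS2}, the Anno--Logvinenko criterion then implies that $\sphFun$ is spherical.

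The main obstacle is the verification that the morphisms arising from the SOD triangles, after applying $\delta_-^*$ (respectively ${}^*\gamma_+$) and the identifications $\delta_-^* \delta_- \cong \Id$ (respectively ${}^*\gamma_+ \gamma_+ \cong \Id$), coincide with the unit and counit of the adjunction $(\sphFun, \sphFun^*)$. This is a standard compatibility for adjoints of composite functors, but must be tracked carefully so that the cones computed genuinely model $\cotwistFun_\sphFun$ and $\twistFun_\sphFun$, and not shifts or rotations thereof.
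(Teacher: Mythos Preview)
The paper gives no proof of this proposition: it simply cites \cite[Propositions~3.7, 3.8]{KS2}. Your proposal therefore goes beyond what the paper does, supplying a direct argument rather than deferring to Kapranov--Schechtman.

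Your argument is correct and is essentially the one in~\cite{KS2}. The identification of $\sphFun^*$ is right, the two SOD triangles are applied correctly (with the paper's ordering convention for semi-orthogonal decompositions), and the resulting descriptions of $\twistFun_\sphFun$ and $\cotwistFun_\sphFun$ follow as you claim. The final appeal to the ``two out of four'' criterion of Anno--Logvinenko~\cite{AL2}, using that $\twistFun_\sphFun$ and $\cotwistFun_\sphFun$ are both equivalences, is the standard way to conclude. Your caveat about tracking that the maps really are the unit and counit of $(\sphFun,\sphFun^*)$ is well placed; this is indeed the only point requiring care, and it is handled by the functoriality of composed adjunctions.
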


Combining, we also have the following.

\begin{proposition}\label{proposition.spherical_functor_monodromy} 
 Given a spherical pair $(\cE_\pm,\cE)$ as above, with associated spherical functor $\sphFun$, we have that:
\begin{enumerate}
\item\label{proposition.spherical_functor_monodromy 1} the action of $\twistFun_\sphFun$ on $\K(\cE_+^\perp)$ is the monodromy around $0$ of ${}^{\K}\cP$;
\item\label{proposition.spherical_functor_monodromy 2} the action of $\cotwistFun_\sphFun$ on $\K(\cE_-)$ is the monodromy around $0$ of $\cP^{\K}$.
\end{enumerate}

\begin{proof}
This follows from the definitions of $\cP^{\K}$ and ${}^{\K}\cP$ from Proposition~\ref{proposition.K_theory}, their monodromies given in Proposition~\ref{proposition.KS_perverse_description}, and the descriptions of the twist and cotwist in Proposition~\ref{proposition.spherical_pair_to_functor}.
\end{proof}
\end{proposition}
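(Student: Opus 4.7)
The plan is a direct comparison of formulas: the statement should follow essentially by unwinding definitions, but the bookkeeping is worth organizing carefully. First I would apply Proposition~\ref{proposition.K_theory} to identify the perverse sheaf ${}^{\K}\cP$, in the \KS{} description of Proposition~\ref{proposition.KS_perverse_description}, with the diagram having $E_\pm = \K(\cE_\pm^\perp)$, $E_0 = \K(\cE)$, and structure maps $f_\pm = \gamma_\pm$, $g_\pm = {}^*\gamma_\pm$; here I tacitly conflate exact functors with their images under $\K(\placeholder)$, which is legitimate because the complexified Grothendieck group is functorial and respects composition of exact functors. Analogously, $\cP^{\K}$ is identified with the diagram having $E_\pm = \K(\cE_\pm)$, $E_0 = \K(\cE)$, $f_\pm = \delta_\pm$, $g_\pm = \delta_\pm^*$.

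For claim~(\ref{proposition.spherical_functor_monodromy 1}), I would then substitute into the monodromy formula $m_+ = g_+ f_- g_- f_+$ recorded in the last sentence of Proposition~\ref{proposition.KS_perverse_description}. This yields the endomorphism
\[ m_+ = {}^*\gamma_+ \compose \gamma_- \compose {}^*\gamma_- \compose \gamma_+ \]
of $\K(\cE_+^\perp)$, which by inspection coincides with the action induced on $\K(\cE_+^\perp)$ by the twist $\twistFun_\sphFun$ as computed in Proposition~\ref{proposition.spherical_pair_to_functor}(\ref{proposition.spherical_pair_to_functor 1}). Claim~(\ref{proposition.spherical_functor_monodromy 2}) proceeds in exactly parallel fashion: substituting the $\cP^{\K}$ identifications into $m_- = g_- f_+ g_+ f_-$ gives
\[ m_- = \delta_-^* \compose \delta_+ \compose \delta_+^* \compose \delta_-, \]
which matches the description of $\cotwistFun_\sphFun$ on $\K(\cE_-)$ in Proposition~\ref{proposition.spherical_pair_to_functor}(\ref{proposition.spherical_pair_to_functor 2}).

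I do not anticipate any real obstacle; the only point that deserves attention is the assertion that the ``$m$'' appearing in Proposition~\ref{proposition.KS_perverse_description} is genuinely the monodromy \emph{around~$0$} in the sense intended by the statement. This is built into the equivalence with the GGM description of Proposition~\ref{proposition.GGM_perverse_description}, where monodromy is realized tautologically on nearby cycles; once this identification is taken on board, the rest of the proof is purely formal substitution.
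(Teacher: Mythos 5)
Your proposal is correct and follows exactly the route of the paper's (very brief) proof: identify $\cP^{\K}$ and ${}^{\K}\cP$ via Proposition~\ref{proposition.K_theory}, plug the resulting $f_\pm, g_\pm$ into the monodromy formulas $m_\pm$ of Proposition~\ref{proposition.KS_perverse_description}, and match the outcomes with the functor-level descriptions of $\twistFun_\sphFun$ and $\cotwistFun_\sphFun$ in Proposition~\ref{proposition.spherical_pair_to_functor}, using functoriality of $\K(\placeholder)$. Nothing is missing; your closing remark that the ``$m$'' of Proposition~\ref{proposition.KS_perverse_description} is the genuine monodromy around $0$ (via the equivalence with the description of Proposition~\ref{proposition.GGM_perverse_description}) is precisely the point the paper also relies on.
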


\subsection{Results from GIT}\label{section quotients}

This subsection tersely recalls results from GIT which will be used to study derived categories of GIT quotients. I follow treatments of Halpern-Leistner~\cite{HL}, and Ballard, Favero, and Katzarkov~\cite{BFK}.
\begin{setup}\label{GIT setup} Take a projective-over-affine variety $X$ with an action of a connected reductive group $G$, and a $G$-ample equivariant line bundle~$\cM$.\end{setup}
The above data determines a semistable locus $X^{\mathrm{ss}}(\cM) \subseteq X$: we take the \defined{GIT quotient} to be the quotient stack $X^{\mathrm{ss}}(\cM)/G$. Note that this may be a Deligne--Mumford stack: see Section~\ref{section simple} and Example~\ref{example orbifold} for simple examples. After certain choices, we obtain a stratification of $X - X^{\mathrm{ss}}(\cM)$ by strata $S^i$. I~now give the properties of this stratification needed in what follows, leaving further details to the references.

To each stratum $S^i$ is associated, by construction, a one-parameter subgroup $\lambda^i$, and the stratum contains an associated open subvariety of the $\lambda^i$-fixed locus, denoted by $Z^i$. Consider a single stratum $S$, dropping indices~$i$ for simplicity. It contains furthermore a subvariety $Y \subseteq X$ flowing to $Z$ under $\lambda$, referred to as a `blade', which will be important: writing the elements of $\lambda$ as $\lambda_t$ for $t \in \mathbb{C}^*,$ this blade is given as follows.
\[
Y = \Big\{\, x \mathrel{\Big|} \underset{t\to 0}{\operatorname{lim}} \, (\lambda_t\cdot x)\text{ lies in }Z \,\Big\} \subset X.
\]
Note that $Z \subseteq Y$, and that $Y$ has a natural projection to $Z$. Notate morphism as below.
\[
\begin{tikzpicture}
	\node (Z) at (0,0) {$\phantom{.}Z$};
	\node (S) at (2,0) {$Y \subseteq S$};
	\node (X) at (4,0) {$X$};
	\draw[right hook->,transform canvas={yshift=+\arrowsplit}] (Z) to  node[above] {$\scriptstyle \sigma$} (S);
	\draw[<<-,transform canvas={yshift=-\arrowsplit}] (Z) to node [below]  {$\scriptstyle \pi$} (S);
	\draw[right hook->] (S) to  node[above] {$\scriptstyle j$} (X);
\end{tikzpicture}
\]
These morphisms may be made equivariant, in the following manner. Write $L$ for the subgroup of elements of $G$ which commute with $ \lambda_t$ for all $t$, and put
\[
P = \Big\{\, g \mathrel{\Big|} \underset{t\to 0}{\operatorname{lim}} \, (\lambda_t \cdot g \cdot \lambda_t^{-1}) \text{ lies in }L \,\Big\} \subseteq G.
\]
Observe that $L \subseteq P$, and that there is a map $P \to L$ given by
\[ g \mapsto \underset{t\to 0}{\operatorname{lim}}\left(\lambda_t  \cdot g \cdot \lambda_t^{-1}\right). \]
There are then morphisms of quotient stacks as follows, reusing the notation above.
\[
\begin{tikzpicture}
	\node (Z) at (0,0) {$\phantom{.}Z/L$};
	\node (S1) at (2,0) {$Y/P$};
	\node (S2) at (4,0) {$S/G$};
	\node (X) at (6,0) {$X/G.$};
	\draw[right hook->,transform canvas={yshift=+\arrowsplit}] (Z) to  node[above] {$\scriptstyle \sigma$} (S1);
	\draw[<<-,transform canvas={yshift=-\arrowsplit}] (Z) to node [below]  {$\scriptstyle \pi$} (S1);
	\draw[right hook->] (S2) to  node[above] {$\scriptstyle j$} (X);
	\draw[->] (S1) to  node[above] {$\scriptstyle \sim$} (S2);
\end{tikzpicture}
\]
The middle morphism is induced by the inclusion of $Y$ into $S$, and is an equivalence: see for instance \cite[Section~2.1, Property~(S2)]{HL}.

\begin{remark}If $G$ is abelian, the blade $Y$ coincides with the stratum $S$, and the subgroups $L$ and $P$ are equal to $G$.
\end{remark}

The following properties of the stratification will be used later.

\begin{proposition}\label{proposition stratification props}
If $X$ is smooth in a $G$-equivariant neighbourhood of $Z$:
\begin{enumerate}
\item\label{proposition stratification props 1}
the sheaf $\cN^\vee_Y X$ restricted to $Z$ has strictly positive $\lambda$-weights;
\item\label{proposition stratification props 2a}
the variety $X$ is smooth in a $G$-equivariant neighbourhood of $Y$\!;
\item\label{proposition stratification props 2b}
the subvarieties $Y$\! and $Z$ are smooth;
\item\label{proposition stratification props 4}
the morphism $\pi\colon Y \to Z$ is a locally trivial bundle of affine spaces;
\item\label{proposition stratification props 3}
the stratum $S$ is regularly embedded in $X$.
\end{enumerate}
\end{proposition}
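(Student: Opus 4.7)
The plan is to exploit the $\mathbb{G}_m$-action provided by $\lambda$ together with the smoothness hypothesis near $Z$, propagating information along $\lambda$-flow lines from $Z$ out to the full blade $Y$, and then to the stratum $S=G\cdot Y$. Throughout, let $U$ denote a $G$-equivariant smooth open neighbourhood of $Z$ in $X$ provided by hypothesis.

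I would first dispose of the claims concerning $Z$ and the conormal weights. Because $Z$ is a union of components of the $\lambda$-fixed locus of the smooth variety $U$, it is smooth. The restriction $TX|_Z$ splits $\lambda$-equivariantly as $N_Z^{-} \oplus TZ \oplus N_Z^{+}$, with the zero-weight summand identified with $TZ$. By the very definition of the blade, a tangent vector at $z \in Z$ flows into $Z$ as $t \to 0$ if and only if it has non-negative $\lambda$-weight, so $TY|_Z = TZ \oplus N_Z^{+}$, and hence $\cN_Y^\vee X|_Z \cong (N_Z^{-})^\vee$, yielding~(\ref{proposition stratification props 1}). This also identifies the normal bundle of $Y$ in $X$ along $Z$.

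Next I would invoke Bialynicki--Birula applied to the smooth $\lambda$-variety $U$, which identifies $Y \cap U$ with the total space of the affine bundle $N_Z^{+} \to Z$; in particular $Y \cap U$ is smooth and $\pi$ restricts to a locally trivial affine bundle. To globalize, I would note that any $y \in Y$ flows into $U$ for $|t|$ small, so $U' := \bigcup_{t \in \mathbb{C}^*} \lambda_t(U)$ is a $\lambda$-invariant open neighbourhood of $Y$ on which $X$ is smooth (since $\lambda$ acts by automorphisms), and its $G$-saturation supplies the $G$-equivariant smooth neighbourhood required for~(\ref{proposition stratification props 2a}). Equivariance of the Bialynicki--Birula description along $\lambda$ then transports the affine-bundle structure from $Y \cap U$ to all of $Y$, giving~(\ref{proposition stratification props 4}) and, in combination with smoothness of $Z$, the smoothness statement~(\ref{proposition stratification props 2b}). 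For~(\ref{proposition stratification props 3}), the subvariety $Y$ is smooth of codimension $\operatorname{rk} N_Z^{-}$ inside the smooth open set $U'$, hence regularly embedded; passing to $S$ via the equivalence $Y/P \xrightarrow{\sim} S/G$ together with smoothness of the quotient map $G \to G/P$ yields a regular embedding of $S$ in $X$ of the same codimension.

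The main obstacle is the global extension: Bialynicki--Birula's theorem is a local statement for smooth ambient varieties, so the delicate part is transporting the smoothness and affine-bundle description from a neighbourhood of $Z$ to the whole of $Y$ using the $\lambda$-flow while simultaneously preserving $G$-equivariance in order to enlarge the neighbourhood to accommodate $S$. All of the above is standard GIT stratification material; an alternative is to extract the statements directly from the references~\cite{HL,BFK}.
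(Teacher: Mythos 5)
This is essentially the paper's argument: the crucial observation that a $G$-invariant (hence $\lambda$-invariant) smooth neighbourhood of $Z$ already contains $Y$, followed by the Bia\l{}ynicki-Birula decomposition of that neighbourhood under $\lambda$ for the smoothness and affine-bundle claims, with the regular embedding of $S$ deduced from the identification $Y/P \simeq S/G$ (for which the paper simply cites Kirwan). The only slip is your parenthetical claim that $S$ has ``the same codimension'' as $Y$: for non-abelian $G$ one has $\operatorname{codim}_X S = \operatorname{codim}_X Y - \dim(G/P)$, but this does not affect the regular-embedding statement actually being proved.
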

\begin{proof}The definition of the blade $Y$ yields (\ref{proposition stratification props 1}). For the rest, we follow the approach of \cite[Lemma~2.7]{HL}. To see~(\ref{proposition stratification props 2a}), note that any $G$-invariant neighbourhood of $Z$ contains $Y$. Then (\ref{proposition stratification props 2b}) and~(\ref{proposition stratification props 4}) follow from the Bia\l{}ynicki-Birula decomposition~\cite[Theorem~4.1]{BBirula} applied to the action of $\lambda$ on this neighbourhood, and~(\ref{proposition stratification props 3}) is given by an argument of Kirwan~\cite[Theorem~13.5]{Kirwan}.
\end{proof}

\subsection{Derived categories}\label{subsection.decomp} 

This subsection follows Halpern-Leistner~\cite{HL}, Halpern-Leistner and Shipman~\cite{HLShipman}, and Ballard, Favero, and Katzarkov~\cite{BFK}. Assume for simplicity that the stratification of the previous subsection consists of a single stratum with associated subvariety $Z$, and that $X$ is a smooth in a $G$-equivariant neighbourhood of $Z$. We define windows in $\D(X/G)$ which will be equivalent to the derived category $\D(X^{\mathrm{ss}}/G)$ of the GIT quotient, by measuring weights on $Z$.

As notation, for $F^{\bullet} \in \D(X/G)$, write $\operatorname{wt}_\lambda F^{\bullet}$ for the set of $\lambda$-weights which appear in some cohomology sheaf $\operatorname{\mathcal{H}}^k (\sigma^* j^* F^{\bullet})$ on~$Z$.
Then given an integer~$w$, a window $\mathcal{G}^w$ is defined as the full subcategory of $\D(X/G)$ with objects
\begin{align*}
\mathcal{G}^w &= \big\{\, F^{\bullet} \in \D(X/G) \mathrel{\big|} \operatorname{wt}_\lambda F^{\bullet} \subseteq [w,w+\eta) \,\big\},
\end{align*}
where $\eta$ is defined as follows.
\begin{definition}\label{definition window width} The \defined{window width} $\eta$ is the $\lambda$-weight of $\operatorname{det} \mathcal{N}^{\vee}_S X$ on $Z$.\end{definition}

The category $\mathcal{G}^w$ is equivalent to $\D(X^{\mathrm{ss}}/G)$: indeed we have the following more refined result.
\begin{theorem}\label{theorem.splittings_of_res}\cite{HL,BFK}
Define a full subcategory \begin{align*}
\mathcal{C}^w &= \big\{\, F^{\bullet} \in \D(X/G) \mathrel{\big|} \operatorname{wt}_\lambda F^{\bullet} \subseteq [w,w+\eta] \,\big\}
\end{align*}
of $\D(X/G)$ and consider the restriction functor to the GIT~quotient
\[ \operatorname{res} \colon \mathcal{C}^w \subset \D(X/G) \longrightarrow \D(X^{\mathrm{ss}}/G). \]
This  is an equivalence on the subcategories $\cG^w$ and $\cG^{w+1}$, and has adjoints
\[ \operatorname{res}^* \colon \D(X^{\mathrm{ss}}/G) \overset{\sim}{\longrightarrow} \mathcal{G}^w \subset \mathcal{C}^w \quad \text{and} \quad
{}^*\!\operatorname{res} \colon \D(X^{\mathrm{ss}}/G) \overset{\sim}{\longrightarrow} \mathcal{G}^{w+1} \subset \mathcal{C}^w.
\]

\end{theorem}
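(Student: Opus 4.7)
The plan is to deduce the theorem from a semi-orthogonal decomposition
\[
\D(X/G) \;=\; \big\langle \cA^{<w},\; \cG^w,\; \cA^{\geq w+\eta} \big\rangle,
\]
where $\cA^{<w}$ (respectively $\cA^{\geq w+\eta}$) denotes the full subcategory of $\D(X/G)$ of complexes set-theoretically supported on $S/G$ whose $\lambda$-weights along $Z$ lie in $(-\infty,w)$ (respectively $[w+\eta,\infty)$). Using the equivalence $Y/P \simeq S/G$ and the affine-bundle projection $\pi$ of Proposition~\ref{proposition stratification props}, both flanks are identified via $j_{*}\pi^{*}$ with weight subcategories of $\D(Z/L)$, and in particular are annihilated by $\operatorname{res}$ since $Y \subseteq X - X^{\mathrm{ss}}$.

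The semi-orthogonality of the triple is the technical core. Cross-Homs from $\cG^w$ into $\cA^{\geq w+\eta}$ and from $\cA^{<w}$ into $\cG^w$ are computed on $Y/P$ against a Koszul resolution of $j_{*}\cO_Y$, whose successive terms are the exterior powers of $\cN^{\vee}_Y X$ and, by Proposition~\ref{proposition stratification props}(\ref{proposition stratification props 1}) together with the definition of window width, carry $\lambda$-weights ranging over $[0,\eta]$. The resulting weight bookkeeping shows these $\Hom$ groups decompose into pieces of nonzero $\lambda$-weight and therefore vanish after taking $L$-invariants. Generation is then an inductive truncation: any $F \in \D(X/G)$ is connected by a triangle of its weight pieces along $Z$ to its ``window part,'' by successively excising local cohomology pieces supported on $Y/P$ whose weights fall outside $[w,w+\eta)$. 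The smoothness hypothesis of Proposition~\ref{proposition stratification props}(\ref{proposition stratification props 2a}) allows this to be carried out locally around $Z$.

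Granting this decomposition, the theorem follows quickly. Since the flanks are annihilated by $\operatorname{res}$, the functor $\operatorname{res}|_{\cG^w}$ factors as the quotient $\D(X/G) \to \cG^w$ given by semi-orthogonal projection, followed by the identification of the quotient with $\D(X^{\mathrm{ss}}/G)$, and is therefore an equivalence; applying the same argument with $w$ replaced by $w+1$ yields the equivalence on $\cG^{w+1}$. The adjoints $\operatorname{res}^*$ and ${}^*\operatorname{res}$ are then obtained by composing a section of restriction, such as derived pushforward along the open immersion $X^{\mathrm{ss}}/G \hookrightarrow X/G$, with the semi-orthogonal projections onto $\cG^w$ and $\cG^{w+1}$ respectively. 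The unit shift in window index is exactly the asymmetry between right and left mutation across the flanking categories: the right adjoint $\operatorname{res}^*$ collects the ``lower-weight'' representative including the boundary weight $w$, while the left adjoint ${}^*\operatorname{res}$ collects the ``upper-weight'' representative including the boundary weight $w+\eta$, accounting for the strict-versus-nonstrict difference between $\cG^w$, $\cG^{w+1}$, and $\cC^w$. I expect the main obstacle to be the Koszul weight computation underpinning semi-orthogonality, which requires careful local analysis because $X$ is only assumed smooth in a $G$-equivariant neighbourhood of $Z$ rather than globally.
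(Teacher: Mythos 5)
Note first that the paper does not prove this statement itself: its proof is a citation to \cite[Lemma~2.4]{HLShipman}, which rests on the window theorems of \cite{HL,BFK}. So you are reconstructing that argument, and your overall strategy --- a semi-orthogonal decomposition of $\D(X/G)$ with middle piece $\cG^w$ and flanks supported on the unstable stratum, which are killed by $\operatorname{res}$, followed by a Verdier-quotient identification --- is indeed the route taken in those references. The problem is that the decomposition you write down is mis-indexed, and the error is fatal as stated. In \cite{HL} the two flanks are generated by $j_*\pi^*E$ with $\operatorname{wt}_\lambda E < w$, respectively $\operatorname{wt}_\lambda E \geq w$: there is \emph{no} $\eta$-offset between them. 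The width $\eta$ enters only through the discrepancy between the $j^*$- and $j^!$-weight conditions, i.e.\ between the two adjoints of $\iota$ recorded in Lemma~\ref{proposition.iota_adjoints}, and that discrepancy is what produces the two equivalences on $\cG^w$ and $\cG^{w+1}$ inside $\cC^w$ and the decomposition of Proposition~\ref{proposition sod}.

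With your thresholds $(-\infty,w)$ and $[w+\eta,\infty)$, the objects $j_*\pi^*E$ with $\operatorname{wt}_\lambda E\in[w,w+\eta)$ lie in neither flank, and they do not lie in $\cG^w$ either, since their $\sigma^*j^*$-weights fill the whole interval $[\operatorname{wt}E,\operatorname{wt}E+\eta]$. Hence your three pieces do not generate $\D(X/G)$, and the step ``$\operatorname{res}|_{\cG^w}$ factors through the quotient by the flanks, hence is an equivalence'' collapses. This is already visible for $X=\mathbb{A}^1$, $G=\mathbb{C}^*$, $\eta=1$: in $\K_0(\mathbb{A}^1/\mathbb{C}^*)\cong\mathbb{Z}[t^{\pm1}]$ the classes of your three subcategories span a proper subgroup, which does not contain the class of the weight-$w$ skyscraper at the origin. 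Moreover one of your asserted orthogonality statements is false: by the adjunction in Lemma~\ref{proposition.iota_adjoints},
\[
\Hom\big(j_*\pi^*E,\,F\big)\;\cong\;\Hom\Big(E,\;\big(\sigma^*j^*F\big)^{\operatorname{wt}E+\eta}\otimes\det\cN_S X|_Z\,[-\operatorname{codim}_X S]\Big),
\]
which is typically nonzero exactly when $\operatorname{wt}E\in[w-\eta,w)$, i.e.\ for objects of your $\cA^{<w}$; concretely, on $\mathbb{A}^1/\mathbb{C}^*$ the weight-$(w-1)$ skyscraper at the origin has a one-dimensional $\operatorname{Ext}^1$ into $\cO(w)\in\cG^w$. (Reading the flanks instead via the $\sigma^*j^*$-weight condition on complexes supported on $S$ only widens the gap and makes generation fail worse.) A smaller point: defining $\operatorname{res}^*$ and ${}^*\!\operatorname{res}$ as pushforward along the open immersion followed by projection is delicate, because pushforward along an open immersion need not preserve bounded coherent complexes; in \cite{HL,HLShipman} the adjoints are instead obtained from the equivalences $\operatorname{res}|_{\cG^w}$ and $\operatorname{res}|_{\cG^{w+1}}$ together with the semi-orthogonal decompositions of $\cC^w$. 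If you correct the flank thresholds (and keep track of the $j^*$ versus $j^!$ bookkeeping), your outline does become the standard proof.
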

\begin{proof}See~\cite[Lemma 2.4]{HLShipman} for this statement.
\end{proof}

Note that the adjoint $\operatorname{res}^*$ is an embedding $ \D(X^{\mathrm{ss}}/G) \to \mathcal{C}^w$. The following lemma and proposition give furthermore a semi-orthogonal decomposition of $\mathcal{C}^w$. Write $\D(Z/L)^w$ for the full subcategory of $\D(Z/L)$ whose objects have $\lambda$-weight~$w$.

\begin{lemma}\label{proposition.iota_adjoints}\cite{HLShipman} For a given $\lambda$-weight $w$, the functor
\[  \iota = j_* \pi^* ( \placeholder ) \colon \D(Z/L)^w \longrightarrow \mathcal{C}^w \]
is an embedding, with left and right adjoints
\[
{}^*\iota = \big(\sigma^* j^* (\placeholder) \big) ^w \quad \text{and} \quad
\iota^{*} = \big(\sigma^* j^* (\placeholder) \big) ^{w+\eta} \otimes \operatorname{det} \mathcal{N}_S X|_Z [-\operatorname{codim}_X S],
\]
where the superscripts denote projection to weight subcategories of $\D(Z/L)$. 
\end{lemma}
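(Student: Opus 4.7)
The plan is to verify the claim in three steps: (i) check that $\iota$ lands in the window $\mathcal{C}^w$, (ii) compute $({}^*\iota)\compose\iota$ and $\iota^*\compose\iota$ to get fully-faithfulness, and (iii) verify the adjunctions from standard formal adjoints of $j_*$ and $\pi^*$.

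For step (i), I compute $\sigma^* j^* \iota(F)$ for $F \in \D(Z/L)^w$. Since $S \hookrightarrow X$ is regularly embedded of codimension $c=\operatorname{codim}_X S$ by Proposition~\ref{proposition stratification props}(\ref{proposition stratification props 3}), the Koszul resolution gives
\[
j^* j_* \cO_S \;\cong\; \bigoplus_{k=0}^{c} \bigwedge^k \mathcal{N}^\vee_{S/X}[k],
\]
so after applying $\sigma^* j^* (-)$ to $\iota(F) = j_* \pi^* F$ and using $\sigma^*\pi^* = \Id$ together with the projection formula,
\[
\sigma^* j^* \iota(F) \;\cong\; F \otimes \bigoplus_{k=0}^{c}\, \sigma^* \bigwedge^k \mathcal{N}^\vee_{S/X}[k].
\]
By Proposition~\ref{proposition stratification props}(\ref{proposition stratification props 1}), $\sigma^* \mathcal{N}^\vee_{S/X}$ has strictly positive $\lambda$-weights summing to $\eta$, so the $\lambda$-weights of $\sigma^* j^* \iota(F)$ lie in $[w, w+\eta]$; hence $\iota$ factors through $\mathcal{C}^w$.

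For step (ii), I read off from the same Koszul decomposition the weight-$w$ and weight-$(w+\eta)$ summands. Strict positivity of the weights of $\sigma^*\mathcal{N}^\vee_{S/X}$ means only the $k=0$ term contributes in weight $w$ and only the $k=c$ term in weight $w+\eta$. This gives
\[
\big(\sigma^* j^* \iota(F)\big)^w \;\cong\; F, \qquad \big(\sigma^* j^* \iota(F)\big)^{w+\eta} \;\cong\; F \otimes \det \mathcal{N}^\vee_{S/X}\big|_Z [c],
\]
and after twisting the second by $\det \mathcal{N}_{S/X}|_Z [-c]$ one recovers $F$. Thus the proposed left adjoint ${}^*\iota$ and right adjoint $\iota^*$ both satisfy $({}^*\iota\compose\iota)(F)\cong F$ and $(\iota^*\compose\iota)(F)\cong F$, establishing fully-faithfulness as soon as step (iii) is carried out.

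For step (iii), start from the standard adjunction $j_*\dashv j^!$, together with $j^! G \cong j^* G \otimes \det \mathcal{N}_{S/X}[-c]$ for the regular closed embedding $j$, and the adjunction $\pi^*\dashv \pi_*$ for the smooth affine bundle $\pi$ (Proposition~\ref{proposition stratification props}(\ref{proposition stratification props 4})):
\[
\Hom(\iota F, G) \;\cong\; \Hom(\pi^* F,\, j^! G) \;\cong\; \Hom(F,\, \pi_* j^! G).
\]
Because $F$ has pure $\lambda$-weight $w$, the pairing factors through the weight-$w$ part of $\pi_* j^! G$. Using the fact that $\pi$ is a vector bundle with zero section $\sigma$ whose normal directions carry strictly positive $\lambda$-weights, the weight-$w$ part of $\pi_*(-)$ on objects with weights $\geq w$ collapses to the $\sigma^*$-pullback (higher symmetric powers contribute only in weights $>w$). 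Transferring the line-bundle twist and shift in $j^!$ then yields exactly $\iota^* G = \big(\sigma^* j^* G\big)^{w+\eta} \otimes \det \mathcal{N}_{S/X}|_Z[-c]$. The left adjoint is obtained by the symmetric argument starting with $j^*\dashv j_*$ and reading off the weight-$w$ piece.

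The main obstacle is the precise identification in step (iii) of the weight-$w$ part of $\pi_*$ with $\sigma^*$ in the correct weight, and keeping the twist by $\det \mathcal{N}_{S/X}|_Z$ and the shift by $[-\operatorname{codim}_X S]$ exactly consistent between the Grothendieck duality formula for $j$ and the weight-projection convention. This is the place where care must be taken and is what distinguishes the present version from the statement of Halpern-Leistner--Shipman; the rest is formal once step~(ii) is in hand.
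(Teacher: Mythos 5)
Your steps (i)--(ii) are essentially sound (with the caveat that $j^*j_*$ of an object is in general only filtered, not split, by the Koszul pieces $\bigwedge^k\cN^\vee_{S}X[k]$ -- harmless here, since weight projection is exact and the weight-$w$ and weight-$(w+\eta)$ pieces sit in the extreme Koszul degrees only). The genuine gap is in step (iii), which is exactly where the content of the lemma lies. First, your weight bookkeeping is reversed: by Proposition~\ref{proposition stratification props}(\ref{proposition stratification props 1}) the fibre directions of $\pi$ carry strictly \emph{positive} $\lambda$-weights, so the fibrewise coordinate functions, hence $\pi_*$, contribute \emph{non-positive} weights, not positive ones; and $j^!G=j^*G\otimes\det\cN_{S}X[-c]$ has weights along $Z$ in $[w-\eta,w]$, i.e.\ $\leq w$, not $\geq w$. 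The principle you invoke (``the weight-$w$ part of $\pi_*$ on objects with weights $\geq w$ collapses to $\sigma^*$'') is false as stated: already for $Y=\mathbb{A}^1$ with a contracted fibre coordinate and $\mathcal{H}=\cO_Y$ twisted into weight $w+a$, $a>0$, one has $(\pi_*\mathcal{H})^w\neq 0$ while $(\sigma^*\mathcal{H})^w=0$. The correct statement needs weights $\leq w$ together with the non-positivity of the fibre contributions, and even then the collapse for objects not pulled back from $Z$ requires a filtration/contracting-action argument (this is precisely the content of the weight lemmas of Halpern-Leistner and Shipman). Second, your treatment of the left adjoint (``the symmetric argument starting with $j^*\dashv j_*$, reading off the weight-$w$ piece'') does not work as written: after $\Hom(G,\iota F)=\Hom(j^*G,\pi^*F)$ there is no weight decomposition available on $Y$ (the $\lambda$-action there is nontrivial), and $\pi^*$ has no left adjoint on bounded coherent categories since $\pi$ is affine and not proper; one must instead compare $\pi^*F$ with $\sigma_*F$ and kill the difference using a weight-vanishing lemma (weights of $j^*G$ along $Z$ are $\geq w$, the discrepancy is built from weights $<w$). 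So the route is fixable, but step (iii) as written would fail.

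For comparison: the paper does not reprove any of this. Its proof of Lemma~\ref{proposition.iota_adjoints} is a citation of \cite[Lemma~2.3]{HLShipman}, noting only that the shift $[-\operatorname{codim}_X S]$ must be added to the right adjoint, coming from Grothendieck duality $j^!\cong j^*\otimes\det\cN_S X[-\operatorname{codim}_X S]$ for the regular embedding $j$ -- the one piece of your step (iii) that you do have right. If you want a self-contained proof, the missing ingredient is precisely the weight-direction analysis above (or an appeal to the corresponding lemmas of Halpern-Leistner).
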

\begin{proof}
This is~\cite[Lemma 2.3]{HLShipman} with a shift~$[-\operatorname{codim}_X S]$ added in the statement coming from Grothendieck duality for the morphism~$j$.
\end{proof}

\begin{proposition}\label{proposition sod}\cite{HL,BFK} There is a semi-orthogonal decomposition
\[
\mathcal{C}^w = \big\langle\! \D(X^{\mathrm{ss}}/G), \,\D(Z/L)^w \big\rangle
\]
with embedding functors $\operatorname{res}^*$ and $\iota$.
\end{proposition}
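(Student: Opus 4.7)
The plan is to verify the three conditions for $\big\langle \operatorname{res}^*(\D(X^{\mathrm{ss}}/G)), \, \iota(\D(Z/L)^w) \big\rangle$ to exhibit a semi-orthogonal decomposition of $\mathcal{C}^w$: full faithfulness of the two embeddings, semi-orthogonality, and generation. The first is already at hand, since Theorem~\ref{theorem.splittings_of_res} identifies $\operatorname{res}^*$ as an equivalence onto the window $\mathcal{G}^w \subset \mathcal{C}^w$, and Lemma~\ref{proposition.iota_adjoints} states that $\iota$ is an embedding.

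For semi-orthogonality I would argue by adjunction: since $\operatorname{res}^*$ is right adjoint to~$\operatorname{res}$,
\[
\Hom_{\mathcal{C}^w}\!\big(\iota(G),\, \operatorname{res}^*(F)\big) \cong \Hom_{\D(X^{\mathrm{ss}}/G)}\!\big(\operatorname{res}\compose \iota(G),\, F\big),
\]
and since $\iota(G) = j_*\pi^*(G)$ is set-theoretically supported on the unstable stratum $S = X - X^{\mathrm{ss}}(\cM)$, the restriction $\operatorname{res}\compose\iota(G)$ vanishes. For generation, given $F \in \mathcal{C}^w$, I would consider the unit triangle
\[
K \longrightarrow F \longrightarrow \operatorname{res}^* \operatorname{res}(F) \longrightarrow K[1],
\]
whose right-hand term lies in $\operatorname{res}^*(\D(X^{\mathrm{ss}}/G))$. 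It then remains to identify~$K$ with an object of $\iota(\D(Z/L)^w)$. By the semi-orthogonality just established and the same adjunction, $K$ belongs to the subcategory of objects of $\mathcal{C}^w$ set-theoretically supported on~$S$.

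The main obstacle is thus the last step: showing that every $K \in \mathcal{C}^w$ supported on $S$ lies in the essential image of $\iota$, or equivalently that the counit $\iota \compose {}^*\iota(K) \to K$ is an isomorphism for such~$K$. Here I would combine the structural inputs of Proposition~\ref{proposition stratification props}: the equivalence $S/G \simeq Y/P$ together with the locally trivial affine bundle structure of $\pi\colon Y \to Z$ makes $\pi^*$ an equivalence onto its essential image in $\D(Y/P)$, while the regular closed embedding of $S$ in $X$ yields a Koszul-type resolution of $j^* j_*$ in terms of exterior powers of $\mathcal{N}^\vee_S X$. Combined with the positivity of the $\lambda$-weights of $\mathcal{N}^\vee_Y X|_Z$ and the defining bound $\operatorname{wt}_\lambda K \subseteq [w, w+\eta]$, a weight-by-weight analysis then isolates the single weight-$w$ piece on~$Z$, from which $K$ is reconstructed by $j_* \pi^*$. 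For the detailed induction I would follow Halpern-Leistner~\cite{HL} or Ballard--Favero--Katzarkov~\cite{BFK}.
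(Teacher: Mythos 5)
Your proposal is correct in outline, but note that the paper itself gives no argument for this proposition: its ``proof'' is a citation to \cite{HL,BFK}, in the precise form stated to \cite[Section~2,~(3)]{HLShipman}. What you have written is essentially a reconstruction of the skeleton of the argument underlying those references. Your formal reductions are sound in the paper's conventions: $\operatorname{res}^*$ is indeed the right adjoint of $\operatorname{res}$ (star on the right means right adjoint here, and it lands in $\cG^w$ by Theorem~\ref{theorem.splittings_of_res}), so semi-orthogonality follows from the adjunction together with $\operatorname{res}\compose\iota=0$, which holds because the single stratum $S$ is closed and $j_*\pi^*$ has image supported on it; and applying $\operatorname{res}$ to the unit triangle, using $\operatorname{res}\compose\operatorname{res}^*\cong\Id$, correctly shows the remaining piece $K$ is supported on $S$ (and $K\in\cC^w$ since the weight condition is closed under cones). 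One small slip: the natural map you want to be an isomorphism is the unit $K\to\iota\compose{}^*\iota(K)$ of the adjunction $({}^*\iota,\iota)$, or equivalently the counit $\iota\compose\iota^*(K)\to K$ for the right adjoint, not ``the counit $\iota\compose{}^*\iota(K)\to K$''. The step you flag as the main obstacle --- that every object of $\cC^w$ supported on $S$ lies in the essential image of $\iota$ --- is genuinely the substantive content: it is the structure theory of the category of unstably supported complexes (the Koszul/weight induction, or the infinite semi-orthogonal decomposition of that category into pieces $\D(Z/L)^v$), which is exactly what \cite{HL,BFK,HLShipman} prove. Deferring it to those sources is therefore consistent with, and no weaker than, the paper's own treatment; proving it from scratch would be a substantial addition rather than a gap in your reduction.
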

\begin{proof}
This follows from \cite{HL,BFK} and may be found in this form in \cite[\mbox{Section~2, (3)}]{HLShipman}.\qedhere\end{proof}

\subsection{Variation of GIT}
\label{subsection vgit}
This subsection sets up the necessary technology to study GIT wall crossings. Consider a GIT problem $X/G$ as in Setup~\ref{GIT setup}. The~results of the previous sections continue to hold if we replace the \mbox{$G$-ample} equivariant line bundle~$\cM$ used there with an element of $\operatorname{NS}^G(X) \otimes \mathbb{R}$, namely the scalar extension to $\mathbb{R}$ of the equivariant Neron--Severi group of $X$. Elements of this group will henceforth be referred to as linearizations, so that we may consider continuous variation of linearization: see \cite[Section~3.2]{BFK} for a fuller treatment. Take then linearizations $\linzn_0$ and $\linzn'$, and let \[\linzn_t = \linzn_0 + t \linzn'\]  for each $t\in[-\epsilon, \epsilon]$ with $\epsilon$ small such that:

\begin{enumerate}
\item[\customlabel{wall crossing 0}{\,$0$\,}{(\,$0$\,)}] there exist strict semistables for $\linzn_0$;
\item[\customlabel{wall crossing -}{$-$}{($-$)}] $X^{\mathrm{ss}}(\linzn_t)$ is constant for $t \in [-\epsilon,0)$, with no strict semistables;
\item[\customlabel{wall crossing +}{$+$}{($+$)}] $X^{\mathrm{ss}}(\linzn_t)$ is constant for $t \in (0, \epsilon]$, with no strict semistables.
\end{enumerate}
In this situation write $\linzn_\pm = \linzn_{\pm\epsilon}$, and say that the transition between $\linzn_\pm$ is a \defined{wall crossing}, and $\linzn_0$ is a linearization \defined{on the wall}. For brevity we denote the GIT quotients corresponding to either side of the wall as follows.
\[
X \gitQuotStab{\pm} G = X^{\mathrm{ss}}(\linzn_\pm) / G.
\]
The semistable locus for $\linzn_0$ may be related to the loci for $\linzn_\pm$ by the formulae
\[
X^{\mathrm{ss}}(\linzn_0) = X^{\mathrm{ss}}(\linzn_\pm) \cup \bigcup_{i \in I_\pm} S^i_\pm
\]
where the $I_\pm$ are indexing sets for unstable strata under the respective linearizations $\linzn_\pm$~\cite[Section~4]{DH}.
We now recall a notion of Halpern-Leistner~\cite[Definition~4.4]{HL}: a similar situation is considered by Ballard, Favero, and Katzarkov in~\cite[Condition~4.3.1]{BFK}.
\begin{definition}\label{definition balanced} A wall crossing is \defined{balanced} if there is an identification $I_- = I_+$ under which one-parameter subgroups~$\lambda_\pm$, associated~$L_\pm$, and subvarieties~$Z_\pm$ correspond as follows:
\[ \lambda_-^i = (\lambda_+^i)^{-1}, \quad L_-^i = L_+^i, \quad \text{and} \quad Z_-^i/L_-^i = Z_+^i/L_+^i . \] 
\end{definition}
I make the following further restriction, for use in what  follows.
\begin{definition}\label{definition simple balanced} A wall crossing is \defined{simple balanced} if it is balanced, and furthermore $|I_-|=|I_+|=1$.
\end{definition}
In this case, we drop indices, and write simply $Z$ and $L$ for the identified subvarieties~$Z_\pm$, and subgroups $L_\pm$.

\section{Spherical pairs}
\label{section.balanced}

Assume given a simple balanced wall crossing as in Definition~\ref{definition simple balanced} for a GIT problem $X/G$ as in Setup~\ref{GIT setup}. In this section we construct a spherical pair in the sense of Definition~\ref{definition sph pair}, and describe the equivalences (\ref{sph pair functor 1})--(\ref{sph pair functor 4}) which appear there. Recall that we take a wall crossing between linearizations~$\linzn_\pm$, and a linearization~$\linzn_0$ on the wall, with an associated subvariety~$Z$ and subgroup~$L$, and that the window width $\eta_\pm$ is defined to be the $\lambda_\pm$-weight of $\det \cN^\vee_{S_\pm} X$~on~$Z$. The GIT quotients corresponding to either side of the wall are denoted by $X \gitQuotStab{\pm} G$.

\begin{theorem}\label{theorem.sph_pair} Assume that:

\renewcommand{\theenumi}{\roman{enumi}}
\begin{enumerate}
\item\label{theorem assumption 1} the variety $X$ is smooth in a $G$-equivariant neighbourhood of $Z$;
\item\label{theorem assumption 2} the canonical sheaf $\omega_X$ has $\lambda_\pm$-weight zero on $Z$; and
\item\label{theorem assumption 3} the group $G$ is abelian.
\end{enumerate}
It follows that $\eta_+=\eta_-$, and we write $\eta$ for their common value. Fix an integer $w$, and consider the category
\[
\mathcal{C} = \left\{ \begin{array}{c|c} F^{\bullet} \in \D\!\big(X^{\mathrm{ss}}(\linzn_0)/G\,\big) & \operatorname{wt}_{\lambda_-} F^{\bullet} \subseteq [w,w+\eta] \end{array} \right\}.
\]
Then there exist embeddings
\begin{enumerate}
\item[\customlabel{embedding 1}{$-$}{($-$)}] $\iota_{-\vphantom{+}} \colon \D(Z/L)_-^{\vphantom{\eta} w} \longrightarrow \cC$
\item[\customlabel{embedding 2}{$+$}{($+$)}] $\iota_+ \colon \D(Z/L)_+^{-w-\eta} \longrightarrow \cC$
\end{enumerate}
where the superscripts denote $\lambda_-$-weight and $\lambda_+$-weight subcategories respectively, such that
\renewcommand{\theenumi}{\arabic{enumi}}
\begin{enumerate}
\item\label{theorem.sph_pair 1} the data $(\operatorname{Im} \iota_\pm, \cC)$ gives a spherical pair $\cP$,
\end{enumerate}
and furthermore:
\begin{enumerate}
\addtocounter{enumi}{1}
\item\label{theorem.sph_pair 2} the right orthogonals $(\operatorname{Im} \iota_\pm)^\perp$ are equivalent to $\D(X \gitQuotStab{\pm} G)$ via $\operatorname{res}_\pm$;
\item\label{theorem.sph_pair 3} the compositions \emph{(\ref{sph pair functor 1})}--\emph{(\ref{sph pair functor 4})} from Definition~\ref{definition sph pair} are given by
\renewcommand{\theenumi}{\alph{enumi}}
\begin{enumerate}
\item\label{composition 1} $ \placeholder \otimes \det \cN_{S_+} X|_Z [ -\operatorname{codim}_X {S_+} ] \colon \D(Z/L)_{-\vphantom{+}}^{w\vphantom{\eta}} \longrightarrow \D(Z/L)_+^{-w-\eta}$
\item\label{composition 2} $ \placeholder \otimes \det \cN_{S_-} X|_Z [ -\operatorname{codim}_X {S_-} ] \colon \D(Z/L)_+^{-w-\eta} \longrightarrow \D(Z/L)_{-\vphantom{+}}^{w\vphantom{\eta}} $
\item\label{composition 3} $\operatorname{res}_+ \compose \operatorname{res}_{-\vphantom{+}}^* \colon \D(X \gitQuotStab{-} G) \longrightarrow \D(X \gitQuotStab{+} G)$
\item\label{composition 4} $\operatorname{res}_- \compose\operatorname{res}_+^* \colon \D(X \gitQuotStab{+} G) \longrightarrow \D(X \gitQuotStab{-} G)$
\end{enumerate}
with \emph{(\ref{sph pair functor 3})} and \emph{(\ref{sph pair functor 4})} factoring via window subcategories $\cG_-^w$ and $\cG_+^{-w-\eta}$ of $\D\!\big(X^{\mathrm{ss}}(\linzn_0)/G\,\big)$\! respectively.
\end{enumerate}
\end{theorem}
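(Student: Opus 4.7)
My plan is to apply Proposition~\ref{proposition sod} from both sides of the wall to produce two semi-orthogonal decompositions of the category~$\cC$, and then verify the four equivalences of Definition~\ref{definition sph pair} by direct computation. As a preliminary, I would check $\eta_+ = \eta_-$ by weight-counting on the normal bundle: assumption~(\ref{theorem assumption 3}) forces $L_\pm = G$ and $Y_\pm = S_\pm$; assumption~(\ref{theorem assumption 1}) together with Proposition~\ref{proposition stratification props}(\ref{proposition stratification props 1}) gives a splitting $\cN_{Z/X} = \cN^+ \oplus \cN^-$ into strictly positive and strictly negative $\lambda_-$-weight parts, with $\cN_{S_\pm/X}|_Z = \cN^\mp$; and assumption~(\ref{theorem assumption 2}), together with the fact that $T_Z$ has $\lambda$-weight zero, forces the total $\lambda_-$-weights of~$\cN^+$ and~$\cN^-$ to cancel, so both window widths equal a common value~$\eta$.

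Using $X^{\mathrm{ss}}(\cM_0) = X^{\mathrm{ss}}(\cM_\pm) \cup S_\pm$, I would then apply Proposition~\ref{proposition sod} inside $\D(X^{\mathrm{ss}}(\cM_0)/G)$ to each side of the wall, producing two semi-orthogonal decompositions
\[
 \cC = \bigl\langle \operatorname{res}_\pm^* \D(X \gitQuot_\pm G),\; \iota_\pm \D(Z/L)^{w_\pm} \bigr\rangle,
\]
with $w_- = w$ and $w_+ = -w-\eta$. Part~(\ref{theorem.sph_pair 2}) is then immediate, identifying $(\operatorname{Im}\iota_\pm)^\perp$ with $\D(X \gitQuot_\pm G)$ via $\operatorname{res}_\pm^*$. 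The four compositions of Definition~\ref{definition sph pair} unpack as follows: (\ref{composition 3}) and~(\ref{composition 4}) become the flop and flop-back functors $\operatorname{res}_+\operatorname{res}_-^*$ and $\operatorname{res}_-\operatorname{res}_+^*$, which factor via the windows $\cG_-^w$ and $\cG_+^{-w-\eta}$ respectively by the description of the adjoints in Theorem~\ref{theorem.splittings_of_res}, and are equivalences because $\cG_-^w$ coincides with the upper window $\cG_+^{-w-\eta+1}$ on the $+$ side (and dually); (\ref{composition 1}) and~(\ref{composition 2}) become $\iota_+^* \iota_-$ and $\iota_-^* \iota_+$, requiring explicit computation via Lemma~\ref{proposition.iota_adjoints}.

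The heart of the proof---and the main obstacle---is computing compositions (\ref{composition 1}) and~(\ref{composition 2}). The key observation is that the smooth subvarieties~$S_\pm$ meet transversely along~$Z$: locally, $X$ is the total space of $\cN^+ \oplus \cN^-$ over $Z$ with $S_\mp$ identified with the sub-bundles $\cN^\pm$. This gives Tor-independence for the Cartesian square with corners $Z$, $S_\pm$, $X$, so base change yields $j_+^* j_{-,*} \pi_-^* F \cong (\sigma_+)_* F$ for $F \in \D(Z/L)^w$. Then $L\sigma_+^* (\sigma_+)_*$ introduces a Koszul-type contribution, producing the complex $F \otimes \bigoplus_k \Lambda^k (\cN^-)^{\vee}[k]$ with zero differentials. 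Since every weight of $(\cN^-)^{\vee}$ is strictly positive in $\lambda_-$, the weight-projection appearing in $\iota_+^*$ from Lemma~\ref{proposition.iota_adjoints} kills every term except $k=0$, and the remaining twist and shift give precisely the stated autoequivalence $(\placeholder) \otimes \det \cN_{S_+}|_Z[-\operatorname{codim}_X S_+]$ of~$\D(Z/L)$; composition~(\ref{composition 2}) follows symmetrically. This step is where all three hypotheses combine essentially: smoothness~(\ref{theorem assumption 1}) for transversality and Tor-independence, the Calabi--Yau condition~(\ref{theorem assumption 2}) to equalize the window widths so the two weight pieces lie in a common~$\cC$, and the abelian-group hypothesis~(\ref{theorem assumption 3}) to ensure $Y = S$ so that Lemma~\ref{proposition.iota_adjoints} applies cleanly. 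Once these four equivalences are established, part~(\ref{theorem.sph_pair 1}) follows from Definition~\ref{definition sph pair}.
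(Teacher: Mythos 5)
Your proposal follows essentially the same route as the paper: the same determinant/weight argument on the splitting of $\cN_Z X$ for $\eta_+=\eta_-$, the two semi-orthogonal decompositions of $\cC$ from Proposition~\ref{proposition sod} giving part~(2) and the window identifications for the compositions (c)--(d), and base change around the transverse square plus the Koszul self-intersection formula with weight truncation for (a)--(b), which is exactly the paper's Lemma~\ref{lemma compute compositions} (there the equivariant base change and the vanishing of the Koszul differentials are justified carefully via Bernstein--Lunts and Arinkin--C\u{a}ld\u{a}\-raru, using that $\pi_\pm\sigma_\pm=\Id$ splits the normal bundle sequence). The only slip is notational: with your conventions one has $\cN_{S_\pm}X|_Z=\cN^\pm$ rather than $\cN^\mp$, consistent with your later (correct) identification of $S_\mp$ with the sub-bundle $\cN^\pm$, and this does not affect the argument.
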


\begin{remark} The equivalences (\ref{sph pair functor 3}) and (\ref{sph pair functor 4}) of the theorem are those obtained by Halpern-Leistner~\cite[Section~4.1]{HL} and Ballard--Favero--Katzarkov~\cite{BFK}.
\end{remark}

\begin{remark} The weight assumption (\ref{theorem assumption 2}) of the theorem is certainly satisfied, for instance, if $X$ is $G$-equivariantly Calabi--Yau.
\end{remark}

\begin{proof}[Proof of Theorem~\ref{theorem.sph_pair}]
Assumption~(\ref{theorem assumption 1}) implies that $X$ is smooth in a neighbourhood of $Y_\pm$, and that $Y_\pm$ and $Z$ are smooth, using Propositions~\ref{proposition stratification props}(\ref{proposition stratification props 2a}) and~\ref{proposition stratification props}(\ref{proposition stratification props 2b}). The claim that $\eta_+=\eta_-$ may then be deduced from assumption~(\ref{theorem assumption 2}) as follows, as in \cite[proof of Proposition~4.5]{HL}. First note that $\Omega_{X}$ on $Z$ is split by sign of $\lambda_\pm$-weights as 
\begin{equation}\label{splitting of normal bundle Z}
\cN^{\vee}_{Y_-} \!X|_{Z} \,\oplus\, \Omega_Z \,\oplus\, \cN^{\vee}_{Y_+} \!X|_{Z}.
\end{equation}
An example is sketched in Figure~\ref{figure blades}. Then the claim is shown by taking determinants, noting that because $G$ is abelian  the strata $S_\pm$ are identified with the $Y_\pm$, and recalling that $\eta_\pm$ is the $\lambda_\pm$-weight of $\det \cN^\vee_{S_\pm} \!X$ on $Z$.

We next establish the embeddings (\ref{embedding 1}) and~(\ref{embedding 2}). Consider the semistable substack for the linearization $\cM_0$ as follows \[ X^\circ = X^{\mathrm{ss}}(\linzn_0).\] We may replace $X/G$ with its open substack $X^\circ\!/G$, noting that assumptions (\ref{theorem assumption 1})--(\ref{theorem assumption 3}) continue to hold. The GIT stratifications associated to the linearizations $\cM_\pm$ then consist of single unstable strata $S_\pm$, so that we are in the setting of Section~\ref{subsection.decomp}. The equalities \[\cC = \cC^w_- = \cC^{-w-\eta}_+\] are then immediate from the definitions in Theorem~\ref{theorem.splittings_of_res}, using the fact that $\lambda_+ = \lambda_-^{-1}$. Hence the functors $\iota_\pm$ of (\ref{embedding 1}) and~(\ref{embedding 2}) may be defined using Lemma~\ref{proposition.iota_adjoints}, and are embeddings as claimed.

\begin{figure}[h]
\def\rotation{70}
\begin{tikzpicture}[node distance=1cm, auto, line width=0.5pt]
\draw[mid arrow] (0,0) to [bend left=20] (0+\rotation:1.5);
\draw[mid arrow] (0,0) to [bend left=20] (180+\rotation:1.5);
\draw[mid arrow] (90+\rotation:1.5) to [bend left=20] (0,0);
\draw[mid arrow] (270+\rotation:1.5) to [bend left=20] (0,0);
\draw[fill=black] (0,0) circle (.2ex);
\node at (1.5,-0.2) {$Y_+$};
\node at (0.8,1.1) {$Y_-$};
\node at (-0.3,-0.2) {$Z$};
\end{tikzpicture}
\caption{Sketch of blades in $X$, with action of $\lambda_-$.}
\label{figure blades}
\end{figure}
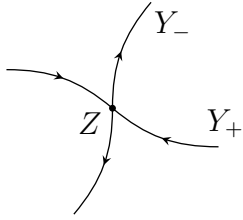

We prove (\ref{theorem.sph_pair 2}) and (\ref{theorem.sph_pair 3}) before concluding (\ref{theorem.sph_pair 1}). Recall that for a spherical pair we require that $\iota_\pm$  have left and right adjoints: these are provided by Lemma~\ref{proposition.iota_adjoints}. By the semi-orthogonal decomposition of Proposition~\ref{proposition sod}, the right orthogonals $(\operatorname{Im} \iota_\pm)^\perp$ are the images of the following embeddings from Theorem~\ref{theorem.splittings_of_res}.
\begin{enumerate}
\item[\customlabel{dual embedding 1}{$-$}{($-$)}] $\operatorname{res}_-^* \colon \D(X \gitQuotStab{-} G) \overset{\sim}{\longrightarrow} \cG_-^w \subset \cC$
\item[\customlabel{dual embedding 2}{$+$}{($+$)}] $\operatorname{res}_+^* \colon \D(X \gitQuotStab{+} G) \overset{\sim}{\longrightarrow} \cG_+^{-w-\eta} \subset \cC$
\end{enumerate}
To satisfy the definition of a spherical pair we require that these embeddings have left adjoints: these are given by $\operatorname{res}_\pm$, and we may deduce~(\ref{theorem.sph_pair 2}).

For~(\ref{theorem.sph_pair 3}), it remains to describe the compositions of functors (\ref{sph pair functor 1})--(\ref{sph pair functor 4}) from Definition~\ref{definition sph pair}. After checking that they are equivalences, we may then conclude that $(\operatorname{Im} \iota_\pm, \cC)$ is a spherical pair, giving (\ref{theorem.sph_pair 1}). Consider first the compositions (\ref{sph pair functor 3}) and (\ref{sph pair functor 4}) given by
$\operatorname{res}_+ \compose \operatorname{res}_{-\vphantom{+}}^*$ and $\operatorname{res}_- \compose\operatorname{res}_+^*$ respectively. The functors $\operatorname{res}_-^*$ and~$\operatorname{res}_+^*$ go to
\[\cG_{\vphantom{+}-}^{\vphantom{\eta}w} = \cG_+^{-w-\eta+1} \qquad \text{and} \qquad \cG_+^{-w-\eta} =\cG_{\vphantom{+}-}^{\vphantom{\eta}w+1} \]  so the compositions are equivalences because $\operatorname{res}_+$ and~$\operatorname{res}_-$ restrict to equivalences on the latter subcategories, by Theorem~\ref{theorem.splittings_of_res}.

Finally, consider the compositions (\ref{sph pair functor 1}) and (\ref{sph pair functor 2}) given as follows.
\renewcommand{\theenumi}{\alph{enumi}}
\begin{enumerate}
\item\label{composition calc 1} $ \iota_+^* \,\compose\, \iota_{-\vphantom{+}} \colon \D(Z/L)_-^w \longrightarrow \D(Z/L)_+^{-w-\eta} $
\item\label{composition calc 2} $ \iota_-^* \,\compose\, \iota_+ \colon \D(Z/L)_+^{-w-\eta} \longrightarrow \D(Z/L)_{-\vphantom{+}}^w  $
\end{enumerate}
\renewcommand{\theenumi}{\roman{enumi}}
These expressions may be expanded using Lemma~\ref{proposition.iota_adjoints}. Recall that morphisms are denoted as follows.
\begin{equation}
\label{equation intersection of Ss Ys}
\begin{tikzpicture}[baseline=-3pt]
	\node (X) at (1.3,0) {$X/G$};
	\node (minusS) at (0,1.3) {$S_-/G$};
	\node (plusS) at (0,-1.3) {$S_+/G$};
	\node (minus) at (-2,1.3) {$Y_-/P_-$};
	\node (plus) at (-2,-1.3) {$Y_+/P_+$};
	\node (Z) at (-3.3,0) {$Z/L$};
	\draw[right hook->] (minusS) to node[above right] {$\scriptstyle j_-$} (X);
	\draw[left hook->] (plusS) to node[below right] {$\scriptstyle j_+$} (X);
	\draw[right hook->,transform canvas={yshift=+\arrowsplit,xshift=-\arrowsplit}] (Z) to node[above left] {$\scriptstyle \sigma_-$} (minus);
	\draw[->>,transform canvas={yshift=-\arrowsplit,xshift=+\arrowsplit}] (minus) to node[below right] {$\scriptstyle \pi_-$} (Z);
	\draw[left hook->,transform canvas={yshift=-\arrowsplit,xshift=-\arrowsplit}] (Z) to node[below left] {$\scriptstyle \sigma_+$} (plus);
	\draw[->>,transform canvas={yshift=+\arrowsplit,xshift=+\arrowsplit}] (plus) to node[above right] {$\scriptstyle \pi_+$} (Z);
	\draw[->] (plus) to node[below,sloped] {$\scriptstyle \sim$} (plusS);
	\draw[->] (minus) to node[above,sloped] {$\scriptstyle \sim$} (minusS);
\end{tikzpicture}
\end{equation}
The composition (\ref{sph pair functor 2}) is given by:
\begin{itemize}
\item applying $ \big(\sigma_-^* j_-^* j^{\phantom*}_{+*}  \pi_+^* (\placeholder)\big)^{w+\eta} $
where the superscript denotes restriction to the $\lambda_-$-weight $w+\eta$ subcategory of $\D(Z/L)$; then
\item tensoring by $ \operatorname{det} \mathcal{N}^{\phantom{\vee}}_{S_-} \!X|_Z [-\operatorname{codim}_X S_-] $.
\end{itemize}
To show that (\ref{sph pair functor 2}) is an equivalence with the form claimed, it therefore suffices to prove Lemma~\ref{lemma compute compositions} below. The result for (\ref{sph pair functor 1}) follows similarly, and we are done.
\end{proof}

\begin{lemma}\label{lemma compute compositions}
The following functors appearing in the proof of Theorem~\ref{theorem.sph_pair} are isomorphic to the identity, after identifying~$\D(Z/L)_{-\vphantom{+}}^w$ and~$\D(Z/L)_+^{-w}.$
\renewcommand{\theenumi}{\alph{enumi}}
\begin{enumerate}
\item\label{lemma compute compositions 1} $\big(\sigma_+^*  j_+^*  j^{\phantom*}_{-*}  \pi_-^* (\placeholder)\big)^{-w}\colon  \D(Z/L)_{-\vphantom{+}}^{\vphantom{\eta}w} \longrightarrow \D(Z/L)_+^{\vphantom{\eta}-w}$
\item\label{lemma compute compositions 2} $\big(\sigma_-^*  j_-^*  j^{\phantom*}_{+*}  \pi_+^* (\placeholder)\big)^{w+\eta}\colon  \D(Z/L)_+^{-w-\eta} \longrightarrow \D(Z/L)_{-\vphantom{+}}^{w+\eta}$
\end{enumerate}
Here superscripts are used as before to denote restriction to $\lambda_-$-weight and $\lambda_+$-weight subcategories, and morphisms are as in \emph{(\ref{equation intersection of Ss Ys})} above.
\end{lemma}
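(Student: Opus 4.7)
My strategy is to reduce each of (\ref{lemma compute compositions 1}) and (\ref{lemma compute compositions 2}) to a self-intersection along the transverse intersection $Y_- \cap Y_+ = Z$, and then to extract the identity by a weight projection.

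First, I would establish the local geometry near $Z$. Since $G$ is abelian we have $S_\pm = Y_\pm$ and $L = G$, and by assumption~(\ref{theorem assumption 1}) together with Proposition~\ref{proposition stratification props} the Bia\l{}ynicki-Birula decomposition for $\lambda_-$ provides an $L$-equivariant splitting
\[
T_X|_Z \;=\; T_Z \,\oplus\, W_+ \,\oplus\, W_-
\]
into $\lambda_-$-weight zero, strictly positive, and strictly negative components. In this local model $T_{Y_\mp}|_Z = T_Z \oplus W_\pm$, so $Y_-$ and $Y_+$ meet transversely along $Z$, and the embeddings $j_\pm$ are Tor-independent in a neighbourhood of $Z$.

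Next, I would invoke derived base change for the resulting Cartesian square in~(\ref{equation intersection of Ss Ys}) to obtain a natural isomorphism $j_+^* j^{\phantom*}_{-*} \cong \sigma^{\phantom*}_{+*} \sigma_-^*$. Together with $\pi_- \compose \sigma_- = \Id_Z$, the composition in~(\ref{lemma compute compositions 1}) collapses to the self-intersection $\sigma_+^* \sigma^{\phantom*}_{+*} F$. Since $\sigma_+$ is a regular closed embedding (Proposition~\ref{proposition stratification props}(\ref{proposition stratification props 2b})) with normal bundle $\cN_Z Y_+ \cong W_-$, the standard self-intersection formula yields
\[
\sigma_+^* \sigma^{\phantom*}_{+*} F \;\cong\; \bigoplus_{k \geq 0} F \otimes \Lambda^k W_-^\vee \,[k].
\]

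The final step is weight analysis. By Proposition~\ref{proposition stratification props}(\ref{proposition stratification props 1}) applied to the $Y_-$ stratum, $W_-^\vee$ has strictly positive $\lambda_-$-weights, and hence so does $\Lambda^k W_-^\vee$ for each $k \geq 1$. Since $F$ has $\lambda_-$-weight $w$ and weight projection is exact on $L$-equivariant objects, projecting to $\lambda_-$-weight~$w$ (equivalently $\lambda_+$-weight~$-w$) picks out exactly the $k = 0$ summand, which is $F$. Statement~(\ref{lemma compute compositions 2}) then follows by exchanging the roles of $\lambda_-$ and $\lambda_+$, equivalently of $W_-$ and $W_+$. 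I expect the main obstacle to be the base-change step: establishing Tor-independence of $j_\pm$ along $Y_- \cap Y_+$ inside the semistable locus $X^{\mathrm{ss}}(\cM_0)$, so that $j_+^* j^{\phantom*}_{-*}$ genuinely reduces to $\sigma^{\phantom*}_{+*} \sigma_-^*$ on all of $\D(Z/L)$, rather than only formally at $Z$. Once this is secured, the weight bookkeeping is routine.
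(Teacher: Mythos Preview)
Your approach matches the paper's: transversality of $Y_\pm$ along $Z$, base change to reduce the composite to a self-intersection $\sigma_\pm^* \sigma^{\phantom*}_{\pm*}$, then weight projection to isolate the $k=0$ term. The paper is marginally more careful on two points you gloss over --- it cites Arinkin--C\u{a}ld\u{a}raru, via the retraction $\pi_+\sigma_+=\Id_Z$, to justify the \emph{direct-sum} form of the self-intersection formula (for a general regular embedding one only has a filtration, though that already suffices for your weight argument), and it works through Bernstein--Lunts to upgrade the Tor-independent base change to the equivariant setting --- but neither is a genuine gap.
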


\begin{remark}I give a proof of this lemma using that the blades $Y_\pm$ intersect transversally in $Z$ for $G$ abelian. For $G$ non-abelian, the above equality $\eta_+ = \eta_-$ continues to hold because of a formula \cite[(4)]{HL} of Halpern-Leistner, but I could not find a way to prove the lemma, or a counterexample: this matter did not seem worth pursuing further, as non-abelian examples may, at least in principle, be reduced to abelian ones by a trick of Thaddeus~\cite[Section 3.1]{Thaddeus}.\end{remark}

\begin{proof}[Proof of Lemma~\ref{lemma compute compositions}]
Recall that, for $G$ abelian, the strata~$S_\pm$ are identified with the blades~$Y_\pm$, and $L=P_\pm=G$, so that~(\ref{equation intersection of Ss Ys}) simplifies to the following diagram of $G$-equivariant morphisms.
\begin{equation}\label{equation intersection of Ys}
\begin{tikzpicture}[baseline=-3pt]
	\node (X) at (1.3,0) {$X$};
	\node (minus) at (0,1.3) {$Y_-$};
	\node (plus) at (0,-1.3) {$Y_+$};
	\node (Z) at (-1.3,0) {$Z$};
	
	\draw[right hook->] (minus) to node[above right] {$\scriptstyle j_-$} (X);
	\draw[left hook->] (plus) to node[below right] {$\scriptstyle j_+$} (X);

	\draw[right hook->,transform canvas={yshift=+\arrowsplit,xshift=-\arrowsplit}] (Z) to node[above left] {$\scriptstyle \sigma_-$} (minus);
	\draw[->>,transform canvas={yshift=-\arrowsplit,xshift=+\arrowsplit}] (minus) to node[below right] {$\scriptstyle \pi_-$} (Z);

	\draw[left hook->,transform canvas={yshift=-\arrowsplit,xshift=-\arrowsplit}] (Z) to node[below left] {$\scriptstyle \sigma_+$} (plus);
	\draw[->>,transform canvas={yshift=+\arrowsplit,xshift=+\arrowsplit}] (plus) to node[above right] {$\scriptstyle \pi_+$} (Z);
\end{tikzpicture}
\end{equation}
I claim that $G$-equivariant base change around this Cartesian square holds, namely a relation between equivariant derived functors
\begin{equation}\label{equation base change}
j_-^* j^{\phantom*}_{+*} \cong \sigma^{\phantom*}_{-*}  \sigma_+^*.
\end{equation}
Consider the non-equivariant base change first. This follows by the argument of \cite[Appendix~A]{Addington} for instance. By the splitting (\ref{splitting of normal bundle Z}) of $\Omega_X$ on $Z$ we know that $Z$ is a transverse intersection of the $Y_\pm$, in particular it has the expected dimension. It then suffices to know that $X$ is smooth in a neighbourhood of $Y_{\pm}$, and that $Y_{\pm}$ and $Z$ are smooth: these facts follow from  assumption~(\ref{theorem assumption 1})  of Theorem~\ref{theorem.sph_pair} that $X$ is smooth in a $G$-equivariant neighbourhood of $Z$, by Propositions~\ref{proposition stratification props}(\ref{proposition stratification props 2a}) and~\ref{proposition stratification props}(\ref{proposition stratification props 2b}).

We claim furthermore a base change relation~(\ref{equation base change}) between equivariant derived functors. For this we may use the framework of Bernstein--Lunts~\cite{BL}. I~give the key points of the argument. Recall that to specify an object~$F$ of the equivariant derived category $\D(X/G)$~\cite[Section~2.4.5]{BL} we proceed as follows.
\begin{itemize}
\item Consider smooth resolutions $\phi\colon X' \to X$ where, by definition, $X'$ is a free $G$-space so that the quotient map $X' \to X'\!/G$ is a locally trivial fibration with fibre $G$~\cite[Definition~2.1.1(b)]{BL}, with $\phi$ a $G$-equivariant smooth map~\cite[Section~1.7]{BL}. 
\item Take an object $F_\phi$ of the ordinary derived category of the free quotient $X'\!/G$ for each such resolution.
\end{itemize}
Now pulling back (\ref{equation intersection of Ys}) along $\phi$ gives a $G$-equivariant square of resolutions, and we may then take quotients by free $G$-actions to get a Cartesian square as follows.
\begin{equation}\label{equation intersection of Ys quot}
\begin{tikzpicture}[baseline=-3pt]
	\node (X) at (1.3,0) {$X'\!/G$};
	\node (minus) at (0,1.3) {$Y'_-/G$};
	\node (plus) at (0,-1.3) {$Y'_+/G$};
	\node (Z) at (-1.3,0) {$Z'\!/G$};
	
	\draw[right hook->] (minus) to node[above right] {$\scriptstyle j_-$} (X);
	\draw[left hook->] (plus) to node[below right] {$\scriptstyle j_+$} (X);

	\draw[right hook->] (Z) to node[above left] {$\scriptstyle \sigma_-$} (minus);
	\draw[left hook->] (Z) to node[below left] {$\scriptstyle \sigma_+$} (plus);
\end{tikzpicture}
\end{equation}
To prove equivariant base change~(\ref{equation base change}) following the method of~\cite[Sections~3.3 and~3.4]{BL} we need to show, in particular, base change for the square (\ref{equation intersection of Ys quot}). For this we argue as in the non-equivariant case, using the following observations. The square~(\ref{equation intersection of Ys quot}) is an intersection of the expected dimension because: the resolution maps ($X' \to X$ and so on) have locally constant, and equal, fibre dimensions; and the quotient maps ($X' \to X'\!/G$ and so on) have constant fibre dimension~$\operatorname{dim}(G)$. The smoothness required follows from the smoothness shown in the non-equivariant setting, using that the quotient maps are locally trivial fibrations, and noting furthermore that the neighbourhood of~$Y_\pm$ may be taken to be $G$-equivariant.

Taking the functor (\ref{lemma compute compositions 2}) and applying base change~(\ref{equation base change}), we are led to consider
\begin{equation*}\label{equation.apply_base_change-}
\sigma_-^* \sigma^{\phantom*}_{-*}   \sigma_+^*  \pi_+^*
\end{equation*}
and because $\pi_+ \sigma_+ = \Id_Z$, this further reduces to
\begin{equation}\label{equation.reduce_to_push_pull-}
\sigma_-^* \sigma^{\phantom*}_{-*}.
\end{equation}

The normal bundle sequence associated to the inclusion $\sigma_-$ is split because $\pi_- \sigma_- = \Id_Z$: it follows, as in Arinkin--C\u{a}ld\u{a}\-raru~\cite{Arinkin-Caldararu}, that~\eqref{equation.reduce_to_push_pull-} is isomorphic to tensoring by the direct sum of 
\[ \bigwedge{}^{\!\!\! k} \cN^\dual_Z Y_- [k]  \]
for integers $k \in [0, \operatorname{codim}_{Y_-} Z].$ The  functor (\ref{lemma compute compositions 2}) therefore acts by tensoring with the $\lambda_-$-weight $0$ subobject of this. The $Y_\pm$ intersect transversally in $Z$, so we have $\cN^\dual_Z Y_- \cong \cN^\dual_{Y_+} \!X|_Z$. Proposition~\ref{proposition stratification props}(\ref{proposition stratification props 1}) gives that the $\lambda_+$-weights of the latter are strictly positive, thence the $\lambda_-$-weights strictly negative, so this subobject is just
\[ \bigwedge{}^{\!\!\! 0} \cN^\dual_Z Y_- = \cO_Z,\]
and therefore (\ref{lemma compute compositions 2}) is the identity as claimed. The proof for (\ref{lemma compute compositions 1}) is similar. 
\end{proof}

Combining the theorem with Proposition~\ref{proposition.spherical_pair_to_functor}, we immediately have the following, recovering a result of  Halpern-Leistner and Shipman~\cite[Section~3.2]{HLShipman}.

\begin{corollary}\label{corollary sph functor} In the setting of Theorem~\ref{theorem.sph_pair}, there is a spherical functor 
\[
\sphFun =  \operatorname{res}_+ \circ \, \iota_{-} \colon \cD  \longrightarrow \D(X \gitQuotStab{+} G)
\]
where $\cD$ denotes the category $\D(Z/L)_-^w$.
\end{corollary}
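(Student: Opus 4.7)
The plan is to obtain the spherical functor by directly applying Proposition~\ref{proposition.spherical_pair_to_functor} to the spherical pair $\cP = (\operatorname{Im} \iota_\pm, \cC)$ produced by Theorem~\ref{theorem.sph_pair}(\ref{theorem.sph_pair 1}), and then to identify the resulting functor with the composition $\operatorname{res}_+ \circ \iota_-$ by tracing through the definitions. In the notation of Section~\ref{subsection.sph_pair}, Proposition~\ref{proposition.spherical_pair_to_functor} produces a spherical functor $\sphFun = {}^*\gamma_+ \circ \delta_- \colon \cE_- \to \cE_+^\perp$, so the work is simply to match this data with the functor in the statement.

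First I would observe that, since $\iota_-$ is fully faithful, the identification $\cE_- = \operatorname{Im} \iota_- \cong \D(Z/L)_-^w = \cD$ given by $\iota_-$ turns the embedding $\delta_- \colon \cE_- \hookrightarrow \cC$ into $\iota_-$ itself. Next I would invoke Theorem~\ref{theorem.sph_pair}(\ref{theorem.sph_pair 2}), which provides an equivalence $\operatorname{res}_+^* \colon \D(X \gitQuotStab{+} G) \overset{\sim}{\to} (\operatorname{Im} \iota_+)^\perp = \cE_+^\perp$. Under this identification, the inclusion $\gamma_+ \colon \cE_+^\perp \hookrightarrow \cC$ corresponds to $\operatorname{res}_+^*$, and hence its left adjoint ${}^*\gamma_+$ corresponds to $\operatorname{res}_+$ (by Theorem~\ref{theorem.splittings_of_res}, $\operatorname{res}_+$ is the left adjoint to $\operatorname{res}_+^*$). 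Composing the two identifications gives $\sphFun = {}^*\gamma_+ \circ \delta_- = \operatorname{res}_+ \circ \iota_-$, and sphericality is inherited from Proposition~\ref{proposition.spherical_pair_to_functor}.

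There is no genuine obstacle here: the entire statement is a formal consequence of Theorem~\ref{theorem.sph_pair} combined with Proposition~\ref{proposition.spherical_pair_to_functor}, and all of the substantive content—verifying the equivalences (\ref{sph pair functor 1})--(\ref{sph pair functor 4}) of Definition~\ref{definition sph pair}—has already been absorbed into the proof of the theorem. The only thing to keep straight is the bookkeeping of adjoints and of the identification of $\cE_+^\perp$ with $\D(X \gitQuotStab{+} G)$; once this is done, agreement with the spherical functor of Halpern-Leistner and Shipman~\cite[Section~3.2]{HLShipman} is immediate from the explicit form $\operatorname{res}_+ \circ \iota_-$.
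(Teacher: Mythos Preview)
Your proposal is correct and matches the paper's approach exactly: the paper states the corollary as an immediate consequence of combining Theorem~\ref{theorem.sph_pair} with Proposition~\ref{proposition.spherical_pair_to_functor}, and you have simply spelled out the bookkeeping (identifying $\delta_-$ with $\iota_-$ and ${}^*\gamma_+$ with $\operatorname{res}_+$ via Theorem~\ref{theorem.sph_pair}(\ref{theorem.sph_pair 2})) that the paper leaves implicit.
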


Finally, note that there is a dual notion of spherical pair, given by swapping the roles of the functors in the definition: therefore, as an immediate corollary of the proof of Theorem~\ref{theorem.sph_pair}, we have the following.

\begin{corollary}\label{corollary dual pair} In the setting of Theorem~\ref{theorem.sph_pair}, the data $\big(\!\D(X \gitQuotStab{\pm} G), \cC\big)$ with embeddings 
\[
\operatorname{res}_\pm^* \colon \D(X \gitQuotStab{\pm} G) \longrightarrow \mathcal{C}
\]
gives a \defined{dual spherical pair}, in the sense that it satisfies Definition~\ref{definition sph pair} with the roles of the functors $\delta$ and $\gamma$ exchanged.
\end{corollary}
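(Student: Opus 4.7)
The claim will follow immediately from the proof of Theorem~\ref{theorem.sph_pair} once the data is relabeled according to the $\delta \leftrightarrow \gamma$ swap prescribed by Definition~\ref{definition sph pair}.

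First I identify the new players. For the dual pair, the embeddings $\operatorname{res}_\pm^*$ take the role of $\delta_\pm$ (previously occupied by $\iota_\pm$), while $\iota_\pm$ take the role of $\gamma_\pm$ (previously occupied by $\operatorname{res}_\pm^*$). The semi-orthogonal decompositions of $\cC$ required by Definition~\ref{definition sph pair}, read in dual form, are supplied by Proposition~\ref{proposition sod}, interpreting $\operatorname{Im}(\operatorname{res}_\pm^*)$ as the new admissible subcategories and $\operatorname{Im}(\iota_\pm)$ as the corresponding orthogonals. The adjoint data transports likewise: $\iota_\pm$ carries both adjoints ${}^*\iota_\pm$ and $\iota_\pm^*$ by Lemma~\ref{proposition.iota_adjoints}, now furnishing what becomes ${}^*\gamma_\pm$ and $\gamma_\pm^*$ under the swap, while $\operatorname{res}_\pm^*$ has adjoint $\operatorname{res}_\pm$ from Theorem~\ref{theorem.splittings_of_res}, now playing the role of the one-sided adjoint required by the new $\delta$.

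Next I unwind the four equivalence conditions (\ref{sph pair functor 1})--(\ref{sph pair functor 4}) for the dual data. Under the swap, conditions (\ref{sph pair functor 1}) and (\ref{sph pair functor 2}) become the compositions $\operatorname{res}_+ \circ \operatorname{res}_-^*$ and $\operatorname{res}_- \circ \operatorname{res}_+^*$, while (\ref{sph pair functor 3}) and (\ref{sph pair functor 4}) become $\iota_+^* \circ \iota_-$ and $\iota_-^* \circ \iota_+$. These are precisely the four compositions shown to be equivalences in parts (\ref{composition 1})--(\ref{composition 4}) of Theorem~\ref{theorem.sph_pair}(\ref{theorem.sph_pair 3}), so the four required equivalences are already in hand.

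The only subtlety is bookkeeping. The $\delta$-side of Definition~\ref{definition sph pair} uses the right adjoint $\delta^*$, whereas the $\gamma$-side uses the left adjoint ${}^*\gamma$; when $\delta$ and $\gamma$ are exchanged, the ``left'' and ``right'' labels on the adjoints being used interchange accordingly, which is exactly what makes the matching of the new four equivalences to the four compositions of Theorem~\ref{theorem.sph_pair}(\ref{theorem.sph_pair 3}) go through without any new constructions being required. In particular we do not need to produce a separate right adjoint for $\operatorname{res}_\pm^*$; the symmetry of the equivalences already proved does all the work.
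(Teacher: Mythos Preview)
Your proposal is correct and follows exactly the paper's approach: the paper simply notes that the dual notion is obtained by swapping the roles of $\delta$ and $\gamma$, and that the corollary is then immediate from the proof of Theorem~\ref{theorem.sph_pair}. Your version spells out the bookkeeping (in particular the swap of left/right adjoints accompanying the $\delta \leftrightarrow \gamma$ swap) that the paper leaves implicit, but the argument is the same.
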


\section{Intersection cohomology}
\label{section intersection cohomology}

Assume given a spherical pair $\cP$ from Theorem~\ref{theorem.sph_pair}. This section studies the perverse sheaves of vector spaces $\cP^{\K}$ and ${}^{\K}\cP$ obtained by taking complexified Grothendieck groups. The following Theorem~\ref{theorem.Ktheory} gives a necessary and sufficient condition for~${}^{\K}\cP$ to be an intersection cohomology complex of a local system on the punctured disk. Simple sufficient conditions are then given in Proposition~\ref{proposition.very_balanced_odd_codim}. A dual result for $\cP^{\K}$ is established in Theorem~\ref{theorem.Ktheory_dual}.

\begin{theorem}\label{theorem.Ktheory} In the setting of Theorem~\ref{theorem.sph_pair}, consider the perverse sheaf~\,${}^{\K}\cP$ provided by Proposition~\ref{proposition.K_theory}, with description
\[
\begin{tikzpicture}
	\node (zero) at (0,0) {$\K(\cC)$};
	\node (plus) at (3,0) {$\K(X \gitQuotStab{+} G),$};
	\node (minus) at (-3,0) {$\phantom{.}\K(X \gitQuotStab{-} G)$};
	\draw[->,transform canvas={yshift=+\arrowsplit}] (minus) to   (zero);
	\draw[<-,transform canvas={yshift=-\arrowsplit}] (minus) to  node[below] {$\scriptstyle \operatorname{res}_- $} (zero);
	\draw[->,transform canvas={yshift=+\arrowsplit}] (plus) to  (zero);
	\draw[<-,transform canvas={yshift=-\arrowsplit}] (plus) to  node[below] {$\scriptstyle \operatorname{res}_+ $} (zero);
\end{tikzpicture}
\]
where unmarked arrows are given by right adjoints. Then:

\begin{enumerate}
\item\label{theorem.Ktheory 2} the monodromy $m$ around $0$ on $\K(X \gitQuotStab{+} G)$ is given by the action of the twist~$\twistFun_\sphFun$ of a spherical functor 
\begin{equation}\label{equation sph functor}
\sphFun =  \operatorname{res}_+ \circ \, \iota_{-} \colon \cD  \longrightarrow \D(X \gitQuotStab{+} G)
\end{equation}
where $\cD$ denotes the category $\D(Z/L)_-^w;$
\item\label{theorem.Ktheory 3} we have
\begin{equation}\label{equation crucial inequality}
 \operatorname{rk} (m - \Idmatrix) \leq \operatorname{dim} \K( \cD );
\end{equation}
\item\label{theorem.Ktheory 4} there is an isomorphism
\[
{}^{\K}\cP \cong \operatorname{IC}( M )
\]
for $M$ a local system on $\Delta-0$ if and only if the inequality \eqref{equation crucial inequality} is saturated.
\end{enumerate}

\begin{proof}
(1) This is an application of Proposition~\ref{proposition.spherical_functor_monodromy}(\ref{proposition.spherical_functor_monodromy 1}).
\\
(2) Combining (\ref{theorem.Ktheory 2}) with the definition of the twist in Proposition~\ref{proposition.spherical_pair_to_functor}(\ref{proposition.spherical_pair_to_functor 1}), it follows that
\[
m = \K(\twistFun_\sphFun) = \Idmatrix - \K(\sphFun) \K(\sphFun^*)
\]
and hence $m - \Idmatrix$ factors through the $\K(\cD)$, implying~(\ref{equation crucial inequality}).
\\
(3) The only possible candidate for $M$ is the local system on $\Delta-0$ obtained by restricting ${}^{\K}\cP$. Taking this $M$, the claimed equation is true after restriction to $\Delta-0$ by construction. By~Proposition~\ref{proposition.simple_perverse_sheaves}, the simple perverse sheaves on~$\Delta$, possibly singular at~$0$, are of the form $\underline{\field}_0[-1]$ or $\operatorname{IC}(L)$ for an irreducible local system $L$ on $\Delta-0$. Hence the composition factors of~${}^{\K} \cP$ include all the composition factors of~$\operatorname{IC}(M)$, and possibly some number of factors of the form $\underline{\field}_0[-1]$: I argue that this number is zero if and only if the inequality~\eqref{equation crucial inequality} is saturated, and thereby deduce the result.

For $P \in \operatorname{Per} (\mathbb{C},0)$ write $\operatorname{dim}_0 P$   for the dimension of its space of vanishing cycles at $0$, namely the space $D_0$ of Proposition~\ref{proposition.GGM_perverse_description}. This dimension is additive on extensions of perverse sheaves, and from Proposition~\ref{proposition.simple_perverse_sheaves} we see that $\operatorname{dim}_0 \underline{\field}_0[-1] = 1$. It follows that the number of  composition factors of the form $\underline{\field}_0[-1]$ in ${}^{\K} \cP$ is
\[
\operatorname{dim}_0 {}^{\K} \cP - \operatorname{dim}_0 \operatorname{IC}(M) \geq 0.
\]
From Proposition~\ref{proposition.KS_perverse_description} it may be deduced that \[\operatorname{dim}_0 {}^{\K}\cP = \dim \K(\cC) - \dim \K(X \gitQuotStab{\pm} G),\]
but this is just $\dim \K(\cD)$ because the dimension of Grothendieck groups is additive for semi-orthogonal decompositions, in particular the decomposition of Proposition~\ref{proposition sod}. From Proposition~\ref{proposition.simple_perverse_sheaves} we have that
\begin{equation*}
\operatorname{dim}_0 \operatorname{IC}(M) =\operatorname{codim} \operatorname{ker} (m - \Idmatrix) =\operatorname{rk} (m - \Idmatrix)
\end{equation*}
and, combining, the result follows.
\end{proof}
\end{theorem}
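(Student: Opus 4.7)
The plan is to handle the three parts in order, with each depending directly on earlier results in the paper. For part~(\ref{theorem.Ktheory 2}), I would apply Proposition~\ref{proposition.spherical_functor_monodromy}(\ref{proposition.spherical_functor_monodromy 1}), which identifies the monodromy around $0$ of ${}^{\K}\cP$ with the action of the twist $\twistFun_\sphFun$ on $\K(\cE_+^\perp)$. Combining this with the identification $\cE_+^\perp \cong \D(X \gitQuotStab{+} G)$ via $\operatorname{res}_+^*$ from Theorem~\ref{theorem.sph_pair}(\ref{theorem.sph_pair 2}), and the description of the induced spherical functor $\sphFun = \operatorname{res}_+ \circ \iota_-$ from Corollary~\ref{corollary sph functor}, yields the claim.

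For part~(\ref{theorem.Ktheory 3}), I would invoke the defining triangle of the twist from Proposition~\ref{proposition.spherical_pair_to_functor}(\ref{proposition.spherical_pair_to_functor 1}): passing to Grothendieck groups turns the exact triangle $\sphFun \sphFun^* \to \Id \to \twistFun_\sphFun$ into the identity $m = \Idmatrix - \K(\sphFun)\K(\sphFun^*)$, so $m - \Idmatrix$ factors through the finite-dimensional space $\K(\cD)$ and hence has rank at most $\dim \K(\cD)$.

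The heart of the argument is part~(\ref{theorem.Ktheory 4}). Since the generic fibre of ${}^{\K}\cP$ is $\K(X \gitQuotStab{+} G)$, the only candidate for $M$ is the restriction ${}^{\K}\cP|_{\Delta - 0}$, which is indeed a local system by construction. With this choice, ${}^{\K}\cP$ and $\operatorname{IC}(M)$ have identical restrictions to $\Delta - 0$, and differ only by additional simple factors supported at $0$. By Proposition~\ref{proposition.simple_perverse_sheaves} these extra factors can only be copies of $\underline{\field}_0[-1]$, so the question reduces to counting them. My plan is to count via the dimension $\dim_0(\placeholder)$ of the vanishing cycles functor at $0$ (i.e.\ the space $D_0$ in Proposition~\ref{proposition.GGM_perverse_description}), which is additive on short exact sequences of perverse sheaves and takes value $1$ on $\underline{\field}_0[-1]$. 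Thus the number of such factors is $\dim_0 {}^{\K}\cP - \dim_0 \operatorname{IC}(M)$, and the two perverse sheaves agree precisely when this difference vanishes.

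The key computation is therefore to evaluate both quantities. Using the explicit form of $\operatorname{IC}(M)$ under the GGM correspondence from Proposition~\ref{proposition.simple_perverse_sheaves}(1), one reads off $\dim_0 \operatorname{IC}(M) = \operatorname{rk}(m - \Idmatrix)$. For $\dim_0 {}^{\K}\cP$, I would translate the KS description of ${}^{\K}\cP$ from Proposition~\ref{proposition.K_theory} into the GGM description via the second half of Proposition~\ref{proposition.KS_perverse_description}, obtaining $\dim_0 {}^{\K}\cP = \dim \K(\cC) - \dim \K(X \gitQuotStab{+} G)$; by additivity of Grothendieck-group dimension on the semi-orthogonal decomposition of Proposition~\ref{proposition sod}, this equals $\dim \K(\cD)$. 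Combining, the vanishing of extra factors is equivalent to saturation of~\eqref{equation crucial inequality}. The main obstacle I anticipate is justifying the identification $\dim_0 {}^{\K}\cP = \dim \K(\cC) - \dim \K(X \gitQuotStab{+} G)$ cleanly from the KS picture, since it requires tracing through the change of description between Propositions~\ref{proposition.GGM_perverse_description} and~\ref{proposition.KS_perverse_description}.
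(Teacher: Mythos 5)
Your proposal is correct and follows essentially the same route as the paper: part (1) via Proposition~\ref{proposition.spherical_functor_monodromy}(\ref{proposition.spherical_functor_monodromy 1}), part (2) via the $\K$-theory image of the twist triangle, and part (3) by counting skyscraper composition factors using the additive vanishing-cycles dimension $\dim_0$, with $\dim_0 \operatorname{IC}(M) = \operatorname{rk}(m-\Idmatrix)$ and $\dim_0 {}^{\K}\cP = \dim\K(\cC) - \dim\K(X \gitQuotStab{+} G) = \dim\K(\cD)$ by the semi-orthogonal decomposition. The step you flag as a potential obstacle is handled in the paper exactly as you propose, by passing from the Kapranov--Schechtman description to the GGM description via Proposition~\ref{proposition.KS_perverse_description}.
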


The inequality \eqref{equation crucial inequality} may not be saturated, for indeed it often happens that $m = \Idmatrix$ so that $\operatorname{rk}(m-\Idmatrix)=0$ as Example~\ref{example standard flop} shows. On the other hand, we can give easily checkable sufficient conditions for \eqref{equation crucial inequality} to be saturated. The assumptions of Theorem~\ref{theorem.sph_pair} implied that $\eta_+=\eta_-$ so that the $\lambda_\pm$-weights of $\det \cN^\vee_{S_\pm} X$ on $Z$ were equal: under the following slightly stronger condition the spherical pair obtained in Theorem~\ref{theorem.sph_pair} will be easy to control at the level of Grothendieck groups, so that these sufficient conditions for the saturation of~(\ref{equation crucial inequality}) will become apparent.

\begin{lemma}\label{proposition.very_balanced} For $G$ abelian, the following conditions are equivalent.
\begin{enumerate}
\item The line bundle $\det \cN_Z  X$ on $Z$ is $G$-equivariantly trivial.
\item The line bundle
\begin{equation}\label{equation.line_bundle}
\det \cN^{\phantom{\vee}}_{S_+} \!X \,\otimes\, \det \cN^{\phantom{\vee}}_{S_-} \!X
\end{equation}
is $G$-equivariantly trivial after restriction to $Z$.
\end{enumerate}
These conditions hold if $X$ and $Z$ are $G$-equivariantly Calabi--Yau.

\begin{proof}The equivalence claim follows from the identification of the strata $S_\pm$ with the $Y_\pm$ and the splitting, as in~(\ref{splitting of normal bundle Z}), of~$\Omega_{X}$ on~$Z$ into
\[ \cN^{\vee}_{Y_-} \!X|_Z \,\oplus\, \Omega_Z \,\oplus\, \cN^{\vee}_{Y_+} \!X|_Z, \]
combined with the definition of $\cN_Z  X$.
The last claim follows from the adjunction formula
\[ \det \cN_Z X = \omega_Z \otimes \omega_X^\vee |_Z.\qedhere \]
\end{proof}
\end{lemma}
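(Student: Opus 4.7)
The plan is to reduce the claim to a single identification of $G$-equivariant line bundles on $Z$, namely
\[
\det \cN_Z X \;\cong\; \det \cN_{S_-}X|_Z \,\otimes\, \det \cN_{S_+}X|_Z,
\]
from which the equivalence of (1) and (2) is immediate. To produce this identification I would first use that $G$ is abelian, so that by the remark following Proposition~\ref{proposition stratification props} the strata $S_\pm$ coincide with the blades $Y_\pm$. I would then invoke the $\lambda$-weight decomposition of $\Omega_X|_Z$ recorded in~(\ref{splitting of normal bundle Z}), which expresses $\Omega_X|_Z$ as the sum of $\cN^\vee_{Y_-}X|_Z$, $\Omega_Z$, and $\cN^\vee_{Y_+}X|_Z$. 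Dualizing this decomposition isolates $\cN_Z X$ as $\cN_{Y_-}X|_Z \oplus \cN_{Y_+}X|_Z$; equivalently, as already used in the proof of Lemma~\ref{lemma compute compositions}, the $Y_\pm$ meet transversally along $Z$, so $\cN_Z Y_\mp \cong \cN_{Y_\pm}X|_Z$ and these two pieces together recover $\cN_Z X$. Taking top exterior powers then yields the desired identification.

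Next I would verify that everything said above is $G$-equivariant, not merely $\cO_Z$-linear. This is the one point requiring care: the decomposition of $\Omega_X|_Z$ is obtained by splitting into $\lambda$-weight pieces, and since $G$ is abelian it commutes with $\lambda$, so the weight summands are preserved by the $G$-action. Hence the isomorphism $\det \cN_Z X \cong \det \cN_{S_-}X|_Z \otimes \det \cN_{S_+}X|_Z$ holds in $\operatorname{Pic}^G(Z)$, and the equivalence of (1) and (2) follows.

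For the final sentence I would invoke the adjunction formula $\omega_Z \cong \omega_X|_Z \otimes \det \cN_Z X$ (which is $G$-equivariant, since $Z$ is $G$-stable and smooth by Proposition~\ref{proposition stratification props}(\ref{proposition stratification props 2b})), rewrite it as
\[
\det \cN_Z X \;\cong\; \omega_Z \,\otimes\, \omega_X^\vee|_Z,
\]
and observe that if both $\omega_X$ and $\omega_Z$ are $G$-equivariantly trivial then so is $\det \cN_Z X$, giving condition~(1).

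I do not anticipate a genuine obstacle here: the argument is a two-line computation once the normal bundle splitting from~(\ref{splitting of normal bundle Z}) is in hand and the identification $S_\pm = Y_\pm$ is invoked. The only subtle point is keeping track of $G$-equivariance, which is automatic from $G$ being abelian (hence commuting with $\lambda$) together with $G$-stability of the subvarieties involved.
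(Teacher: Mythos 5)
Your proposal is correct and follows essentially the same route as the paper: the identification $S_\pm = Y_\pm$ for $G$ abelian, the $\lambda$-weight splitting of $\Omega_X|_Z$ from~(\ref{splitting of normal bundle Z}) to recover $\cN_Z X$ as the sum of the two conormal pieces, and the adjunction formula $\det \cN_Z X = \omega_Z \otimes \omega_X^\vee|_Z$ for the Calabi--Yau claim. Your additional remarks on $G$-equivariance of the weight decomposition are a harmless elaboration of what the paper leaves implicit.
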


\begin{proposition}\label{proposition.very_balanced_odd_codim} In the setting of Theorem~\ref{theorem.sph_pair}, assume furthermore that: 
\renewcommand{\theenumi}{\roman{enumi}}
\begin{enumerate}
\item the equivalent conditions of Lemma~\ref{proposition.very_balanced} hold; and
\item the codimension of $Z$ in $X$ is odd.
\end{enumerate}
Then the inequality \emph{(\ref{equation crucial inequality})} is saturated, and \[{}^{\K}\cP  \cong \operatorname{IC}( M )\] where $M$ is the local system on $\Delta-0$ obtained by restricting ${}^{\K}\cP.$

\begin{proof}
I first calculate the action of the cotwist~$\cotwistFun_\sphFun$ of the spherical functor~$\sphFun$ from~(\ref{equation sph functor}) on~ $\K (\cD)$ under the assumptions. From Proposition~\ref{proposition.spherical_pair_to_functor}(\ref{proposition.spherical_pair_to_functor 2}) this is given by the composition~(\ref{sph pair functor 1}) then~(\ref{sph pair functor 2}) from Definition~\ref{definition sph pair}. In our setting, by Theorem~\ref{theorem.sph_pair}(\ref{theorem.sph_pair 3}), this acts by tensoring by the line bundle \eqref{equation.line_bundle} restricted to~$Z$, which is $G$-equivariantly trivial by assumption, and a cohomological shift of the negative~of
\[ \operatorname{codim}_X S_+ \,+\, \operatorname{codim}_X S_-. \]
For $G$ abelian, the strata $S_\pm$ are identified with the $Y_\pm$, which intersect transversally in $Z$ as explained in the proof of Lemma~\ref{lemma compute compositions}, so this is just $\operatorname{codim}_X Z$. If this is odd, then the action of $\cotwistFun_\sphFun$ on the Grothendieck group is therefore simply $-\Idmatrix$. It then follows from the definition of $\cotwistFun_\sphFun$ in Proposition~\ref{proposition.spherical_pair_to_functor}(\ref{proposition.spherical_pair_to_functor 2}) that
\[\K(\cotwistFun_\sphFun) = \Idmatrix - \K(\sphFun^*) \K(\sphFun) = - \Idmatrix \qquad \text{and} \qquad \K(\sphFun^*) \K(\sphFun) = 2 \cdot \Idmatrix .\]
The map $2 \cdot \Idmatrix$ is clearly bijective, and thence $\K(\sphFun)$ is injective, and $\K(\sphFun^*)$ surjective onto $\K (\cD)$. Using Theorem~\ref{theorem.Ktheory}(\ref{theorem.Ktheory 2}) and the definition of~$\twistFun_\sphFun$ from Proposition~\ref{proposition.spherical_pair_to_functor}(\ref{proposition.spherical_pair_to_functor 1}) we then have that
\begin{align*}\operatorname{rk} (m - \Idmatrix) & = \operatorname{rk} \!\big(\K(\twistFun_\sphFun) - \Idmatrix\,\big) \\
& = \operatorname{rk} \!\big(\!-\K(\sphFun) \K( \sphFun^*)\big) \\
& = \operatorname{dim} \K (\cD),\end{align*} and hence the inequality \eqref{equation crucial inequality} is saturated, and the claim follows by the proof of Theorem~\ref{theorem.Ktheory}(\ref{theorem.Ktheory 4}). 
\end{proof}
\end{proposition}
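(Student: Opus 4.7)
The plan is to pin down the action of the cotwist $\cotwistFun_\sphFun$ on $\K(\cD)$ exactly and then use a simple linear-algebra argument to upgrade this to a statement about the rank of $m-\Idmatrix$. First I would apply Proposition~\ref{proposition.spherical_pair_to_functor}(\ref{proposition.spherical_pair_to_functor 2}): the cotwist is the composition of the functors labelled (\ref{sph pair functor 1}) and (\ref{sph pair functor 2}) in Definition~\ref{definition sph pair}, which by Theorem~\ref{theorem.sph_pair}(\ref{theorem.sph_pair 3}) is tensoring by the line bundle $\det\cN_{S_+}X\otimes\det\cN_{S_-}X$ restricted to $Z$, together with a cohomological shift by $-(\operatorname{codim}_X S_+ + \operatorname{codim}_X S_-)$.

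Next I would use the two hypotheses. Hypothesis~(i), via Lemma~\ref{proposition.very_balanced}, makes the above line bundle $G$-equivariantly trivial, so the tensor factor acts as the identity on $\K(\cD)$. For the shift factor, since $G$ is abelian the strata $S_\pm$ coincide with the blades $Y_\pm$, which by Proposition~\ref{proposition stratification props} together with the splitting~(\ref{splitting of normal bundle Z}) intersect transversally in $Z$; hence the two codimensions add to $\operatorname{codim}_X Z$, which hypothesis~(ii) declares to be odd. At the level of Grothendieck groups the shift $[-1]$ contributes $-\Idmatrix$, so the cotwist acts on $\K(\cD)$ as $-\Idmatrix$.

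The key linear-algebra step is now immediate: from the definition of $\cotwistFun_\sphFun$ in Proposition~\ref{proposition.spherical_pair_to_functor}(\ref{proposition.spherical_pair_to_functor 2}) one has $\Idmatrix - \K(\sphFun^*)\K(\sphFun) = -\Idmatrix$, so $\K(\sphFun^*)\K(\sphFun) = 2\cdot\Idmatrix$, which is an isomorphism of $\K(\cD)$. It follows that $\K(\sphFun)$ is injective and $\K(\sphFun^*)$ is surjective, and therefore the composition $\K(\sphFun)\K(\sphFun^*)$ on $\K(X\gitQuotStab{+}G)$ has rank equal to $\operatorname{dim}\K(\cD)$. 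Combining with Theorem~\ref{theorem.Ktheory}(\ref{theorem.Ktheory 2}) and Proposition~\ref{proposition.spherical_pair_to_functor}(\ref{proposition.spherical_pair_to_functor 1}) gives
\[ m - \Idmatrix = \K(\twistFun_\sphFun) - \Idmatrix = -\K(\sphFun)\K(\sphFun^*), \]
whose rank is $\operatorname{dim}\K(\cD)$, saturating~(\ref{equation crucial inequality}). The intersection-cohomology conclusion then follows from Theorem~\ref{theorem.Ktheory}(\ref{theorem.Ktheory 4}).

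The only delicate point I anticipate is the bookkeeping that identifies the shift degree as $\operatorname{codim}_X Z$: this relies on invoking the transverse intersection $Y_-\cap Y_+ = Z$ established in the proof of Lemma~\ref{lemma compute compositions} and the fact that $G$ abelian allows us to equate strata with blades. Everything else is a direct consequence of already-established results.
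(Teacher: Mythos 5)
Your proposal is correct and follows essentially the same route as the paper's proof: compute $\K(\cotwistFun_\sphFun)=-\Idmatrix$ via Proposition~\ref{proposition.spherical_pair_to_functor}(\ref{proposition.spherical_pair_to_functor 2}), Theorem~\ref{theorem.sph_pair}(\ref{theorem.sph_pair 3}), the triviality of the line bundle~\eqref{equation.line_bundle}, and the odd shift $\operatorname{codim}_X Z$ from the transverse intersection of the blades, then deduce $\K(\sphFun^*)\K(\sphFun)=2\cdot\Idmatrix$ and conclude $\operatorname{rk}(m-\Idmatrix)=\operatorname{rk}\big(\!-\K(\sphFun)\K(\sphFun^*)\big)=\dim\K(\cD)$. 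The only difference is that you spell out the injectivity/surjectivity rank argument slightly more explicitly, which matches what the paper leaves implicit.
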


Finally, we record the following dual result to Theorem~\ref{theorem.Ktheory}.

\begin{theorem}\label{theorem.Ktheory_dual} In the setting of Theorem~\ref{theorem.sph_pair}, consider the perverse sheaf~$\cP^{\K}$ provided by Proposition~\ref{proposition.K_theory}, with description
\[
\begin{tikzpicture}
	\node (zero) at (0,0) {$\K(\cC)$};
	\node (plus) at (3.5,0) {$\Kbig{\D(Z/L)_+^{-w-\eta}},$};
	\node (minus) at (-3.25,0) {$\phantom{.}\Kbig{\D(Z/L)_{-\vphantom{+}}^{w\vphantom{\eta}}}$};
	\draw[->,transform canvas={yshift=+\arrowsplit}] (minus) to  node[above] {$\scriptstyle \iota_{-\vphantom{+}} $} (zero);
	\draw[<-,transform canvas={yshift=-\arrowsplit}] (minus) to   (zero);
	\draw[->,transform canvas={yshift=+\arrowsplit}] (plus) to  node[above] {$\scriptstyle \iota_+ $} (zero);
	\draw[<-,transform canvas={yshift=-\arrowsplit}] (plus) to  (zero);
\end{tikzpicture}
\]
where unmarked arrows are given by left adjoints. Then:

\begin{enumerate}
\item the monodromy $m'$ around~$0$ on~$\Kbig{\D(Z/L)_-^w}$ is given by the action of the cotwist~$\cotwistFun_\sphFun$ of the spherical functor \eqref{equation sph functor};
\item we have
\begin{equation}\label{equation crucial inequality dual}
 \operatorname{rk} (m' - \Idmatrix) \leq \operatorname{dim} \K(X \gitQuotStab{+} G);
\end{equation}
\item there is an isomorphism
\[
\cP^{\K} \cong \operatorname{IC}( M' )
\]
for $M'$ a local system on $\Delta-0$ if and only if the inequality \emph{(\ref{equation crucial inequality dual})} is saturated.
\end{enumerate}
\begin{proof}The argument is dual to that of Theorem~\ref{theorem.Ktheory}, using Proposition~\ref{proposition.spherical_functor_monodromy}(\ref{proposition.spherical_functor_monodromy 2}) in place of Proposition~\ref{proposition.spherical_functor_monodromy}(\ref{proposition.spherical_functor_monodromy 1}), and so on.
\end{proof}
\end{theorem}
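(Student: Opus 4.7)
The plan is to mirror the proof of Theorem~\ref{theorem.Ktheory} step by step, swapping the roles played by the two sides of the semi-orthogonal decompositions in~(\ref{equation sod}) as encoded in Proposition~\ref{proposition.K_theory}. Concretely, whereas ${}^{\K}\cP$ had sides $\K(X \gitQuotStab{\pm} G) = \K(\cE_\pm^\perp)$ so that the monodromy was governed by the twist on $\K(\cE_+^\perp)$ via Proposition~\ref{proposition.spherical_functor_monodromy}(\ref{proposition.spherical_functor_monodromy 1}), the dual sheaf $\cP^{\K}$ has sides $\K(\cE_\pm) = \K(\D(Z/L)_\pm^{\bullet})$, and so I~invoke Proposition~\ref{proposition.spherical_functor_monodromy}(\ref{proposition.spherical_functor_monodromy 2}) to identify the monodromy around $0$ with the action of the cotwist $\cotwistFun_\sphFun$ on $\K(\cE_-) = \Kbig{\D(Z/L)_-^w}$. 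This handles part~(1).

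For part~(2), I use the formula for the cotwist in Proposition~\ref{proposition.spherical_pair_to_functor}(\ref{proposition.spherical_pair_to_functor 2}), which after applying $\K(\placeholder)$ reads $m' = \K(\cotwistFun_\sphFun) = \K(\sphFun^*)\K(\sphFun) - \Idmatrix$. Thus $m' - \Idmatrix$ factors through the target $\K\!\big(\!\D(X \gitQuotStab{+} G)\big)$ of $\K(\sphFun)$, yielding the inequality~(\ref{equation crucial inequality dual}). (Note the sign convention in the definition of $\cotwistFun_\sphFun$ matches that used in the proof of Theorem~\ref{theorem.Ktheory}(\ref{theorem.Ktheory 3}); only the roles of source and target of $\sphFun$ are interchanged.)

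For part~(3), the only candidate $M'$ is the local system on $\Delta-0$ obtained by restricting $\cP^{\K}$, so by Proposition~\ref{proposition.simple_perverse_sheaves} it suffices to count the composition factors of the form $\underline{\field}_0[-1]$, using additivity of the dimension $\operatorname{dim}_0$ of vanishing cycles at~$0$. From Proposition~\ref{proposition.GGM_perverse_description} combined with Proposition~\ref{proposition.KS_perverse_description}, we have
\[\operatorname{dim}_0 \cP^{\K} \,=\, \dim \K(\cC) - \dim \Kbig{\D(Z/L)_-^w},\]
and the semi-orthogonal decomposition of Proposition~\ref{proposition sod} (applied on the $\cM_-$-side of the wall) identifies this difference with $\dim \K(X \gitQuotStab{-} G) = \dim \K(X \gitQuotStab{+} G)$, the last equality coming from the equivalence Theorem~\ref{theorem.sph_pair}(\ref{composition 3}). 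On the other hand, Proposition~\ref{proposition.simple_perverse_sheaves} gives $\operatorname{dim}_0 \operatorname{IC}(M') = \operatorname{rk}(m'-\Idmatrix)$. The multiplicity of $\underline{\field}_0[-1]$ in $\cP^{\K}$ is thus $\dim \K(X \gitQuotStab{+} G) - \operatorname{rk}(m'-\Idmatrix)$, which vanishes precisely when~(\ref{equation crucial inequality dual}) is saturated.

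No step presents a real obstacle, as the architecture of the argument is already in place in Theorem~\ref{theorem.Ktheory}; the only point to double-check is the bookkeeping for the dimension of vanishing cycles on the dual side, which requires invoking the appropriate semi-orthogonal decomposition (Proposition~\ref{proposition sod}) to rewrite $\dim \K(\cC)$ in terms of the window and orthogonal pieces.
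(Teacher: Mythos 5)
Your proposal follows exactly the route the paper intends: the paper's proof is literally the remark that the argument of Theorem~\ref{theorem.Ktheory} dualizes, with Proposition~\ref{proposition.spherical_functor_monodromy}(\ref{proposition.spherical_functor_monodromy 2}) replacing Proposition~\ref{proposition.spherical_functor_monodromy}(\ref{proposition.spherical_functor_monodromy 1}), and your parts (1) and (3) spell out that dualization correctly, including the bookkeeping
$\operatorname{dim}_0 \cP^{\K} = \dim \K(\cC) - \dim \Kbig{\D(Z/L)_-^w} = \dim \K(X \gitQuotStab{-} G) = \dim \K(X \gitQuotStab{+} G)$
via Proposition~\ref{proposition sod} and the wall-crossing equivalence.

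The one genuine slip is the sign in part (2). Because $\cotwistFun_\sphFun$ is defined with a shift $[-1]$ in Proposition~\ref{proposition.spherical_pair_to_functor}(\ref{proposition.spherical_pair_to_functor 2}), its class on Grothendieck groups is
$\K(\cotwistFun_\sphFun) = \Idmatrix - \K(\sphFun^*)\K(\sphFun)$,
which is the formula used in the proof of Proposition~\ref{proposition.very_balanced_odd_codim}, not $\K(\sphFun^*)\K(\sphFun) - \Idmatrix$ as you wrote. Taken literally, your formula gives $m' - \Idmatrix = \K(\sphFun^*)\K(\sphFun) - 2\Idmatrix$, which does not factor through $\K(X \gitQuotStab{+} G)$, so the rank bound (\ref{equation crucial inequality dual}) would not follow as stated. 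With the corrected sign one has $m' - \Idmatrix = -\K(\sphFun^*)\K(\sphFun)$, which factors through $\K(X \gitQuotStab{+} G)$ exactly as you intend, and the rest of your argument goes through unchanged.
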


\newpage
\section{Examples}
\label{section examples}

In this section, I give some examples of spherical pairs for flops in higher dimensions.
\begin{example}\label{example orbifold} Let $X=V \oplus \det V^\vee$ for $V$ a vector space of dimension $d=2n$, with $\mathbb{C}^*$-action induced by dilation of $V$. Then $Z=0$ with
\[
X^{\mathrm{ss}}_+ = X - (0 \oplus \det V^\vee) \qquad \text{and} \qquad
X^{\mathrm{ss}}_- = X - (V \oplus 0),
\]
so that the quotient $X \gitQuotStab{+} \mathbb{C}^*$ is the total space of the bundle \[\cO(-2n) \to \mathbb{P}V,\] and $X \gitQuotStab{-} \mathbb{C}^*$ is its flop, the orbifold $V/C_{2n}$ of the cyclic group $C_{2n}$ acting on $V$ by scalars.

The space $X$~is smooth of dimension~\mbox{$2n+1$} and equivariantly Calabi--Yau, so the assumptions of Theorem~\ref{theorem.sph_pair} and Proposition~\ref{proposition.very_balanced_odd_codim} are easily seen to be satisfied. Hence we have a spherical pair $\cP$, and ${}^{\K}\cP$ is an intersection cohomology complex of a local system on $\Delta-0$.  \end{example}

\begin{example}\label{example standard flop} Let $X=V \oplus V^\vee$ for $V$ a vector space of dimension $d$, with $\mathbb{C}^*$-action induced by dilation of $V$. Then $Z=0$ with
\[
X^{\mathrm{ss}}_+ = X - (0 \oplus V^\vee) \qquad \text{and} \qquad
X^{\mathrm{ss}}_- = X - (V \oplus 0),
\]
so that the quotient $X \gitQuotStab{+} \mathbb{C}^*$ is the total space of the bundle $V^\vee \otimes \cO(-1) \to \mathbb{P}V$, and $X \gitQuotStab{-} \mathbb{C}^*$ is its flop, the total space of $V \otimes \cO(-1) \to \mathbb{P}V^\vee$.

Once again, the space $X$ is smooth and equivariantly Calabi--Yau, so the assumptions of Theorem~\ref{theorem.sph_pair} are easily seen to be satisfied. Hence we have a spherical pair $\cP$. However, we cannot apply Proposition~\ref{proposition.very_balanced_odd_codim} for dimension reasons and indeed the conclusion, that ${}^{\K}\cP$ is an intersection cohomology complex of a local system on $\Delta-0$, does not hold.

To see this, let $w=0$ so that $\iota_-^w(\cO_0) = \cO_{V \oplus 0}$, and observe (for instance by comparing terms in the respective Koszul resolutions) that this has the same Grothendieck group class, up to sign, as $ \cO_{0 \oplus V^\vee} \otimes \det V$. The latter object is clearly in the kernel of $\operatorname{res}_+$, and because $\cO_0$ is a generator we may deduce that $\K(\sphFun)=0$ and 
\begin{align*}\operatorname{rk} (m - \Idmatrix) & = \operatorname{rk} \!\big(\K(\twistFun_\sphFun) - \Idmatrix\,\big) \\
& = \operatorname{rk} \!\big(\!-\K(\sphFun) \K( \sphFun^*)\big) \\
& = 0.\end{align*}
Here $\cD=D(Z/L)^w_-$ is generated by a single object $\cO_0$, so \[\operatorname{dim} \K( \cD )=1,\] the inequality~\eqref{equation crucial inequality} is not saturated, and thence ${}^{\K}\cP$ is not the intersection cohomology complex of a local system on $\Delta-0$ by Theorem~\ref{theorem.Ktheory}(\ref{theorem.Ktheory 4}).\end{example}

\newpage

\end{document}